\newtheorem{definition}{Definition}[section]
\newtheorem{theorem}[definition]{Theorem}
\newtheorem{lemma}[definition]{Lemma}
\newtheorem{corollary}[definition]{Corollary}
\newtheorem{proposition}[definition]{Proposition}
\theoremstyle{definition}
\newtheorem{remark}[definition]{Remark}
\newtheorem{example}[definition]{Example}
\newcommand{\la}{\left\langle}
\newcommand{\ra}{\right\rangle}
\newcommand{\clB}{\mathcal{B}}
\newcommand{\bbN}{\mathbb{N}}
\newcommand{\bbC}{\mathbb{C}}
\newcommand{\ucpa}{\text{UCP}_A(H)}
\newcommand{\ucpan}{\text{UCP}_A(\bbC^n)}
\newcommand{\ucpann}{\text{UCP}_{M_n(A)}(\bbC^n)}
\newcommand{\ucpag}{\text{UCP}^G_A(H)}
\newcommand{\ucpanng}{\text{UCP}^G_{M_n(A)}(\bbC^n)}
\newcommand{\ucpang}{\text{UCP}^G_A(\bbC^n)}
\newcommand{\cpa}{\text{CP}_A(H)}
\newcommand{\cpag}{\text{CP}^G_A(H)}
\newcommand{\racom}{\rho(A)^\prime}
\newcommand{\diX}{\int^\oplus_{X}}
\newcommand{\acom}{{\rho(A)}^{\prime}}
\begin{document}


\title[Ergodic Decomposition in the Space of UCP Maps]{Ergodic Decomposition in the Space of Unital Completely Positive Maps}

\author[A.~Bhattacharya]{Angshuman Bhattacharya}
\address{Department of Mathematics, IISER Bhopal, MP 462066, India}
\email{angshu@iiserb.ac.in}

\author[C. J.~Kulkarni]{Chaitanya J. Kulkarni}
\address{Department of Mathematics, IISER Bhopal, MP 462066, India}
\address{Department of Mathematical Sciences, IISER Mohali, Punjab 140306, India}
\email{chaitanyakulkarni58@gmail.com, chaitanyakulkarni@iisermohali.ac.in}

\keywords{ergodic decomposition, extreme points, barycentric decomposition, generalized orthogonal measures, unital completely positive maps}
\subjclass{Primary (2020) 46A55, 46B22, 47A35; Secondary (2020) 46L45, 47A20, 46L08}


\begin{abstract}
The classical decomposition theory for states on a C*-algebra that are invariant under a group action has been studied by using the theory of orthogonal measures on the state space \cite{BR1}. 

In \cite{BK3}, we introduced the notion of \textit{generalized orthogonal measures} on the space of unital completely positive (UCP) maps from a C*-algebra $A$ into $B(H)$. In this article, we consider a group $G$ that acts on a C*-algebra $A$, and the collection of $G$-invariant UCP maps from $A$ into $B(H)$. This article examines a $G$-invariant decomposition of UCP maps by using the theory of generalized orthogonal measures on the space of UCP maps, developed in \cite{BK3}. Further, the set of all $G$-invariant UCP maps is a compact and convex subset of a topological vector space. Hence, by characterizing the extreme points of this set, we complete the picture of barycentric decomposition in the space of $G$-invariant UCP maps. We establish this theory in Stinespring and Paschke dilations of completely positive maps. We end this note by mentioning some examples of UCP maps admitting a decomposition into $G$-invariant UCP maps. 
\end{abstract}

\maketitle

\section{Introduction}
Consider a group $G$ acting on a C*-algebra $A$. A $G$-invariant state on a C*-algebra $A$ is a state which remains invariant under the group action. A detailed study of the $G$-invariant and $G$-ergodic decompositions of $G$-invariant states on a C*-algebra may be found in \cite[Section 3]{BR1}, where the set of extreme points, that is, the set of $G$-ergodic states has been characterized. The set of extreme points of $G$-invariant states forms a subset of pure states of a C*-algebra. In this article, we attempt to establish a decomposition theory for unital completely positive (UCP) maps that are invariant under a group of *-automorphisms of a C*-algebra $A$. However, in this setting, the theory differs while describing the set of extreme points of $G$-invariant UCP maps from the classical setting (as seen in \cite[Section 3]{BR1}) because of the non-commutativity of the codomain. To counter this difference, we use the uniqueness of the minimal Stinespring dilation of UCP maps followed by Arveson's operator theoretic techniques to identify the set of extreme points of $G$-invariant UCP maps. En route, we show that a certain invariance property of the generalized orthogonal measures ensures that the measure is supported on the subset of $G$-invariant UCP maps. Following this, the maximality of such measures give an integral decomposition of $G$-invariant UCP maps with the generalized orthogonal measures supported on the set of ergodic UCP maps. This generalizes the classical theory of decomposition of $G$-invariant states on a C*-algebra to the setting of UCP maps from a C*-algebra $A$ into the non-commutative codomain $B(H)$. 

Throughout this article, we consider $A$ as a unital, separable C*-algebra, with $H$ denoting a separable Hilbert space. The algebra of all bounded operators on the Hilbert space $H$ is denoted by $B(H)$. Let $G$ be a group, and denote by $\text{Aut}(A)$ the group of *-automorphisms of the C*-algebra $A$. We consider a group action of $G$ on $A$ by a group representation of $G$ on $\text{Aut}(A)$, and we denote this by
\begin{equation*}
\tau : G \rightarrow \text{Aut}(A).
\end{equation*}
A UCP map $\phi : A \rightarrow B(H)$ that satisfies the condition $\phi(a) = \phi(\tau_g(a))$ for all $g \in G$ and for all $a \in A$ is called a $G$-invariant UCP map. We fix the following notations for this article: 
\begin{align*}
\ucpa &:= \{ \phi : A \rightarrow B(H) ~|~ \phi ~ \text{is a UCP map} \} \\
\ucpag &:= \{ \phi \in \ucpa ~|~ \phi ~ \text{is $G$-invariant} \}
\end{align*}
The set $\ucpag$ is a compact (in BW topology) and convex subset of the locally convex topological vector space of all completely bounded maps from $A$ into $B(H)$. Let $\text{ext}(\ucpag)$ denote the set of all extreme points of $\ucpag$. We call an element of the set $\text{ext}(\ucpag)$ as a $G$-ergodic UCP map.

We establish the theory of $G$-invariant decomposition of UCP maps by using the theory of generalized orthogonal measures on the space of UCP maps developed in \cite{BK3}. A $G$-invariant decomposition of UCP maps is exhibited by a particular subclass of generalized orthogonal measures. This subclass can be identified in a one-to-one correspondence with a particular class of abelian subalgebras of the commutant of the minimal Stinespring dilation of the UCP map. This one-to-one correspondence uses the uniqueness of the minimal Stinespring dilation of completely positive maps. Further, to complete the picture of $G$-ergodic decomposition of UCP maps, one needs to characterize the set $\text{ext}(\ucpag)$. This has been done by introducing Radon--Nikodym type results in the space of $G$-invariant UCP maps. We explicitly characterize the set $\text{ext}(\ucpag)$, by using Arvesons's operator theoretic techniques in Stinespring and Paschke dilations of completely positive maps. This accomplishes the barycentric decomposition in the space of $G$-invariant UCP maps. One may refer to \cite{BK2} for a similar type of techniques of characterizing extreme points of a compact convex set.  
  
We organize this article as follows: in Section \ref{sec;p}, we recall some notations, definitions, and results that are required for later sections. Firstly, we recall results from Stinespring dilation and Paschke dilation of completely positive maps followed by a brief introduction to the theory of generalized orthogonal measures from \cite{BK3}. In Section \ref{sec;giged}, we examine the cases for $G$-invariant and $G$-ergodic decomposition of $G$-invariant UCP maps. For this, we make the use of the theory of generalized orthogonal measures developed in \cite{BK3}. In Section \ref{sec;geucpm}, first we establish the Radon--Nikodym type results in the space of $G$-invariant completely positive maps in Stinespring as well as Paschke dilation of completely positive maps. Using these results, we characterize the set $\text{ext}(\ucpag)$. The basic idea of the proof is inspired by Arveson's \cite{Arveson1} operator theoretic proof of describing the extreme points of completely positive maps having ranges in $B(H)$. We end the article with Section \ref{sec;egoi} by providing some examples of UCP maps admitting a decomposition into $G$-invariant UCP maps.

\section{Preliminaries}\label{sec;p}

This section is divided in two subsections. In the first subsection, we recall the results from Stinespring and Paschke dilation of completely positive maps and Radon--Nikodym type results in both the dilations. One may find the detailed study on this topic in \cite{Arveson1, Paschke, Paulsen}.

\subsection{Dilations of Completely Positive Maps:}
To begin with we recall the Stinespring dilation theorem.

\begin{theorem}\cite[Theorem 4.1]{Paulsen}\label{thm;sd}
Let $A$ be a unital C*-algebra and $H$ be a Hilbert space. Let $\phi : A \rightarrow B(H)$ be a completely positive map. Then there exists a Hilbert space $K$, a bounded linear map $V : H \rightarrow K$, and a unital *-homomorphism $\rho : A \rightarrow B(K)$ such that
\begin{equation*}
\phi(a) = V^*\rho(a)V \hspace{10pt} \text{for all $a \in A$}. 
\end{equation*} 
Moreover, the set $\{ \rho(a)Vh : a \in A, h \in H \}$ spans a dense subspace of $K$, and if $\phi$ is unital, then $V : H \rightarrow K$ is an isometry.
\end{theorem}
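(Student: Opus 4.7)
The plan is to mimic the GNS construction, with complete positivity of $\phi$ playing the role that positivity plays for a state. I would start with the algebraic tensor product $A \odot H$ and define a sesquilinear form by
\[
\langle a \otimes h,\, b \otimes k\rangle := \langle \phi(b^*a)h,\, k\rangle_H,
\]
extended bilinearly in the first variable and conjugate-linearly in the second. The first substantial step is showing this form is positive semi-definite: for an element $\xi = \sum_i a_i \otimes h_i$, one computes
\[
\langle \xi, \xi\rangle = \sum_{i,j}\langle \phi(a_j^* a_i)h_i, h_j\rangle_H = \bigl\langle [\phi(a_j^* a_i)]_{i,j}\,\mathbf{h},\,\mathbf{h}\bigr\rangle_{H^n},
\]
where $\mathbf{h}=(h_1,\dots,h_n)$. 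Because the matrix $[a_j^* a_i]$ is positive in $M_n(A)$ (it equals $[a_i]^*[a_j]$), complete positivity of $\phi$ forces $[\phi(a_j^* a_i)] \geq 0$ in $M_n(B(H))$, so $\langle \xi,\xi\rangle \geq 0$.

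Next I would quotient out the null space $N = \{\xi : \langle \xi,\xi\rangle = 0\}$, which is a linear subspace by Cauchy--Schwarz, obtain a genuine inner product on $(A\odot H)/N$, and define $K$ as its completion. Writing $[a\otimes h]$ for the image of $a\otimes h$, I would then define $\rho(a)$ on the dense subspace by
\[
\rho(a)[b\otimes h] := [ab\otimes h].
\]
Well-definedness and boundedness are the key technical points: using the operator inequality $b^*a^*ab \leq \|a\|^2\, b^*b$ in $A$ together with complete positivity, one obtains $\langle \rho(a)\xi, \rho(a)\xi\rangle \leq \|a\|^2\langle \xi,\xi\rangle$, which simultaneously shows that $\rho(a)$ descends to the quotient and extends to a bounded operator on $K$. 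Verifying $\rho(ab)=\rho(a)\rho(b)$, $\rho(a)^* = \rho(a^*)$, and $\rho(1)=I$ on the dense subspace is then routine.

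Finally, define $V\colon H \to K$ by $Vh := [1_A \otimes h]$; boundedness follows from $\|Vh\|^2 = \langle \phi(1)h,h\rangle \leq \|\phi(1)\|\,\|h\|^2$. Then
\[
\langle V^*\rho(a)Vh, k\rangle = \langle [a\otimes h], [1\otimes k]\rangle = \langle \phi(a)h,k\rangle,
\]
so $\phi(a)=V^*\rho(a)V$. Since $\rho(a)Vh = [a\otimes h]$ and elements $a\otimes h$ span a dense subspace of $K$ by construction, the span of $\{\rho(a)Vh\}$ is dense. If $\phi$ is unital, then $\|Vh\|^2 = \langle \phi(1)h,h\rangle = \|h\|^2$, so $V$ is an isometry.

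The main obstacle is precisely the Schwarz-type estimate needed to show $\rho(a)$ is well-defined and bounded on the quotient; this is the one place where complete positivity (rather than mere positivity) is genuinely invoked, and it is what forces the codomain of $\phi$ to be operator-valued rather than scalar-valued in the construction. Everything else — the density statement, the identification of $V$, and the unital case — is a formal consequence of the construction.
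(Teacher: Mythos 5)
Your construction is correct and is exactly the standard GNS-style proof of Stinespring's theorem found in the cited reference (the paper itself only quotes \cite[Theorem 4.1]{Paulsen} and gives no proof): the sesquilinear form on $A\odot H$, positivity via complete positivity, the quotient--completion, the Schwarz-type bound $b^*a^*ab\leq\|a\|^2 b^*b$ for well-definedness and boundedness of $\rho(a)$, and $Vh=[1_A\otimes h]$. The only blemish is an index slip: the positive matrix you should invoke is $[a_i^*a_j]_{i,j}$ (equal to $C^*C$ for the row matrix $C=(a_1,\dots,a_n)$), to which your quadratic form reduces after relabelling $i\leftrightarrow j$, whereas $[a_j^*a_i]_{i,j}$ as written is its transpose and need not be positive in $M_n(A)$; this does not affect the argument.
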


The triple $(\rho, V, K)$ from Theorem \ref{thm;sd} is called as the minimal Stinespring representation for $\phi$. The following proposition states that for a given completely positive map $\phi$, the minimal Stinespring representation is unique upto an unitary equivalence.  

\begin{proposition}\cite[Proposition 4.2]{Paulsen} \label{prop;umsd}
Let $A$ be a unital C*-algebra and $H$ be a Hilbert space. Let $\phi : A \rightarrow B(H)$ be a completely positive map. For $i =1, 2$, let $(\rho_i, V_i, K_i)$ be two minimal Stinespring representations for $\phi$. Then there exits a unitary $U : K_1 \rightarrow K_2$ such that $UV_1 = V_2$, and $U \rho_1 U^* = \rho_2$. 
\end{proposition}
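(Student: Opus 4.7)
The plan is to construct the unitary $U$ directly on the dense subspace spanned by $\{\rho_1(a)V_1 h : a \in A, h \in H\}$ in $K_1$ by decreeing
\begin{equation*}
U\bigl(\textstyle\sum_{i} \rho_1(a_i)V_1 h_i\bigr) := \textstyle\sum_{i} \rho_2(a_i)V_2 h_i,
\end{equation*}
and then to verify in sequence: well-definedness, isometry, extendibility to a unitary, the intertwining relation $U\rho_1(\cdot) = \rho_2(\cdot)U$, and finally $UV_1 = V_2$.

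First I would compute inner products of the generating vectors in both Hilbert spaces. For any $a, b \in A$ and $h, k \in H$,
\begin{equation*}
\langle \rho_1(a)V_1 h, \rho_1(b)V_1 k\rangle_{K_1} = \langle V_1^* \rho_1(b^*a) V_1 h, k\rangle_H = \langle \phi(b^*a)h, k\rangle_H,
\end{equation*}
and the analogous identity for the second representation gives the same value. Extending by sesquilinearity, the norm of $\sum_i \rho_1(a_i)V_1 h_i$ in $K_1$ depends only on $\phi$ (through the matrix $[\phi(a_j^* a_i)]$), not on the representation. This shows both that $U$ is well-defined (if a linear combination is zero in $K_1$, its image is zero in $K_2$) and that $U$ is isometric on its dense domain.

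Next I would extend $U$ to all of $K_1$ by continuity; the image contains the dense subspace $\mathrm{span}\{\rho_2(a)V_2 h\}$ of $K_2$, so $U$ is surjective and hence unitary. For the intertwining property, on a typical spanning vector $\rho_1(b)V_1 h$ I compute
\begin{equation*}
U\rho_1(a)\bigl(\rho_1(b)V_1 h\bigr) = U\bigl(\rho_1(ab)V_1 h\bigr) = \rho_2(ab)V_2 h = \rho_2(a)\bigl(\rho_2(b)V_2 h\bigr) = \rho_2(a)U\bigl(\rho_1(b)V_1 h\bigr),
\end{equation*}
so $U\rho_1(a) = \rho_2(a)U$ on a dense set, and by continuity on all of $K_1$; equivalently $U\rho_1(a)U^* = \rho_2(a)$. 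For the relation $UV_1 = V_2$, I use that $\rho_i$ is unital: $V_i h = \rho_i(1_A) V_i h$, so $U V_1 h = U\rho_1(1_A)V_1 h = \rho_2(1_A)V_2 h = V_2 h$.

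The only genuine subtlety is the well-definedness of $U$ on linear combinations, which is why the inner-product computation has to be done for arbitrary pairs $(a,h)$ and $(b,k)$ rather than just on single generators; once this is in hand, every other step is formal manipulation on the dense subspace followed by a continuity argument.
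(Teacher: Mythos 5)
Your argument is correct and is essentially the standard proof that the paper relies on (via the citation to Paulsen), and it mirrors exactly the method the paper itself uses for the analogous uniqueness of the Paschke dilation in Theorem \ref{thm;umpd}: define the map on the dense span of $\{\rho_1(a)V_1h\}$, verify well-definedness and isometry through the computation $\langle \rho_1(a)V_1h,\rho_1(b)V_1k\rangle=\langle\phi(b^*a)h,k\rangle$, extend by continuity, and get surjectivity from minimality. The intertwining relation and $UV_1=V_2$ (using unitality of $\rho_i$) are handled correctly, so nothing is missing.
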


Let $A$ be a unital separable C*-algebra and $H$ be a separable Hilbert space. Consider the set
\begin{equation*}
\cpa := \{ \phi : A \rightarrow B(H) ~|~ \phi ~~ \text{is completely positive} \}.
\end{equation*} 
Define a partial order on $\cpa$ by $\phi_1 \leq \phi_2$, if $\phi_2 - \phi_1 \in \cpa$. For a fixed $\phi \in \cpa$, consider the set
\begin{equation*}
[0, \phi] = \{ \theta \in \cpa ~~ | ~~ \theta \leq \phi \}.
\end{equation*} 
Let $V^* \rho V$ be the minimal Stinespring dilation of $\phi$, where $V$ be a bounded map from $H$ to $K$, and $\rho$ be representation of $A$ on $K$. For a fixed $T \in \racom$, define $\phi_T := V^* \rho T V$. Then $\phi_T$ is a map from $A$ into $B(H)$. Arveson proved the following Radon--Nikodym type theorem for completely positive maps in \cite{Arveson1}:

\begin{theorem} \cite[Theorem 1.4.2]{Arveson1} \label{thm;arnd}
There is an affine order isomorphism of the partially ordered convex set of operators $\{T \in \rho(A)^\prime ~|~ 0 \leq T \leq  1_K \}$ onto $[0, \phi]$, which is given by the map 
\begin{equation*}
T \mapsto \phi_T = V^* \rho  T V.
\end{equation*}
\end{theorem}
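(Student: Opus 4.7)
The plan is to verify that $T \mapsto \phi_T$ is a well-defined, affine, order-preserving bijection whose inverse is also order-preserving.

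For any $T \in \rho(A)^\prime$ with $0 \leq T \leq 1_K$, the operators $T^{1/2}$ and $(1_K - T)^{1/2}$ also lie in $\rho(A)^\prime$ by the continuous functional calculus, so one can factor
\begin{align*}
\phi_T(a) &= V^* T^{1/2} \rho(a) T^{1/2} V, \\
(\phi - \phi_T)(a) &= V^* (1_K - T)^{1/2} \rho(a) (1_K - T)^{1/2} V.
\end{align*}
Each expression is in Stinespring form, hence completely positive, so $\phi_T \in [0,\phi]$. Linearity in $T$ gives affinity, and applying the same factorization to $T_2 - T_1$ shows $T_1 \leq T_2 \Rightarrow \phi_{T_1} \leq \phi_{T_2}$.

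For injectivity, suppose $\phi_{T_1} = \phi_{T_2}$ and set $S = T_1 - T_2 \in \rho(A)^\prime$. For all $a, b \in A$ and $h, k \in H$, commuting $S$ past $\rho(b^*)$ gives
$$\langle S \rho(a) V h,\, \rho(b) V k\rangle = \langle V^* \rho(b^* a) S V h,\, k\rangle = \langle (\phi_{T_1} - \phi_{T_2})(b^* a) h, k\rangle = 0,$$
and by the density clause of Theorem \ref{thm;sd} the vectors $\{\rho(a)Vh\}$ span a dense subspace of $K$, forcing $S = 0$.

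The main step, and main obstacle, is surjectivity. Given $\theta \in [0,\phi]$, I would construct $T$ via a sesquilinear form on the dense subspace $K_0 = \mathrm{span}\{\rho(a)Vh : a \in A,\, h \in H\}$. For finite sums $\xi = \sum_i \rho(a_i) V h_i$ and $\eta = \sum_j \rho(b_j) V k_j$, set
$$[\xi, \eta] := \sum_{i,j} \langle \theta(b_j^* a_i) h_i, k_j\rangle.$$
Complete positivity of both $\theta$ and $\phi - \theta$ yields the matrix-positivity estimate
$$0 \leq [\xi, \xi] = \sum_{i,j} \langle \theta(a_j^* a_i) h_i, h_j\rangle \leq \sum_{i,j} \langle \phi(a_j^* a_i) h_i, h_j\rangle = \|\xi\|^2,$$
and Cauchy--Schwarz for the positive form $[\cdot,\cdot]$ then makes it simultaneously well-defined (if $\xi = 0$ in $K_0$, then $[\xi,\eta]=0$ for all $\eta$) and bounded on $K_0 \times K_0$. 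This gives a bounded operator $T \in B(K)$ with $0 \leq T \leq 1_K$ satisfying $\langle T \xi, \eta\rangle = [\xi, \eta]$. A direct computation using $\rho(c)\rho(a)Vh = \rho(ca)Vh$ shows that both $\langle T\rho(c)\xi,\eta\rangle$ and $\langle \rho(c)T\xi,\eta\rangle$ evaluate to $\langle \theta(b^* c a) h, k\rangle$ on generating vectors, giving $T \in \rho(A)^\prime$; taking single-term vectors with $a_i = b_j = 1_A$ yields $\theta = \phi_T$. Finally, the inverse is order-preserving because $\theta_1 \leq \theta_2$ forces $[\cdot,\cdot]_1 \leq [\cdot,\cdot]_2$ on $K_0$, and hence $T_1 \leq T_2$ on $K$.
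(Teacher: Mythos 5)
This is Arveson's Radon--Nikodym theorem, which the paper only cites (as \cite[Theorem 1.4.2]{Arveson1}) without giving a proof, and your argument is essentially the standard one and is correct: the $T^{1/2}$, $(1_K-T)^{1/2}$ factorizations give well-definedness, affinity and order preservation, density of $\mathrm{span}\{\rho(a)Vh\}$ gives injectivity, and for surjectivity the positive sesquilinear form $[\xi,\eta]=\sum_{i,j}\langle\theta(b_j^*a_i)h_i,k_j\rangle$, dominated by the inner product of $K$ via complete positivity of $\phi-\theta$, produces the required $T\in\rho(A)'$ with $0\le T\le 1_K$, and the inverse is order preserving by comparing the forms. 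One cosmetic slip at the end: to recover $\theta=\phi_T$ you should evaluate the form on $\xi=\rho(a)Vh$ with $a$ arbitrary and $\eta=Vk$ (i.e.\ only $b_j=1_A$), since setting $a_i=b_j=1_A$ as written yields only $\theta(1_A)=V^*TV$.
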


We also use the Paschke dilation of completely positive maps on Hilbert modules. We quickly recall a few results about the Paschke dilation of completely positive maps that are required in this article. 

\begin{definition} \cite[Definition 2.1]{Paschke} \label{def;phm}
Let $B$ be a unital C*-algebra. Then a pre-Hilbert $B$-module $X$ is a right $B$-module equipped with a conjugate-bilinear map $\la \cdot, \cdot \ra : X \times X \rightarrow B$ satisfying:
\begin{enumerate}
\item $\la x, x \ra \geq 0$ for all $x \in X$;
\item $\la x, x \ra = 0$ if and only if $x = 0$;
\item $\la x, y \ra = \la y, x \ra^*$ for all $x, y \in X$;
\item $\la x.b, y \ra = \la x, y \ra b$ for all $x, y \in X$ and $b \in B$
\end{enumerate}	
\end{definition}

For a pre-Hilbert $B$-module $X$, we define a norm on $X$ by $\| x \|_X = \|\la x, x \ra \|^{\frac{1}{2}}.$ 

\begin{definition} \cite[Definition 2.4]{Paschke} \label{def;hm}
A pre-Hilbert $B$-module $X$ which is complete with respect to $\| \cdot \|_X$ is called as a Hilbert $B$-module. 
\end{definition}

Let $X$ be a Hilbert $B$-module. Then we consider the following set of operators on $X$.

\begin{definition} \cite{Paschke} \label{def;ao}
A bounded operator $T$ on $X$ is called as an \textit{adjointable operator}, if there exists a bounded operator $T^*$ on $X$ such that $\la Tx, y \ra = \la x, T^*y \ra$ for all $x, y \in X$
\end{definition}

Let $\mathcal{P}(X)$ denote the set of all \textit{adjointable operators} on $X$. Then the set $\mathcal{P}(X)$ forms a C*-algebra \cite{Paschke}. Suppose $A$ is a unital C*-algebra and $\sigma: A \rightarrow \mathcal{P}(X)$ is a *-representation of $A$ on $X$. Now, for $e \in X$, define a map $\phi : A \rightarrow B$ by
\begin{equation*}
\phi(a) := \langle \sigma(a)e, e \rangle.
\end{equation*}
Then $\phi$ is a completely positive map. Paschke proved in \cite{Paschke} that every completely positive map from $A$ into $B$ arises this way. 

\begin{theorem} \cite[Theorem 5.2]{Paschke} \label{thm;pd}
Let $A$ and $B$ be unital C*-algebras, and $\phi \colon A \rightarrow B$ be a completely positive map. Then there exists a Hilbert $B$-module $X$, a *-representation $\sigma \colon A \rightarrow \mathcal{P}(X)$, and $e \in X$ such that $\phi(a) = \langle \sigma(a)e, e \rangle$ for all $a \in A$. The set $\{ \sigma(a)(eb) ~|~ a \in A, b \in B \}$ spans a dense subspace of $X$.
\end{theorem}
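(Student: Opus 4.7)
The plan is to mimic the classical GNS/Stinespring construction, but with a $B$-valued inner product in place of a scalar one. I would start from the algebraic tensor product $A \otimes B$, regarded as a right $B$-module via $(a \otimes b)\cdot b' := a \otimes (bb')$, and equip it with the conjugate-bilinear pairing
\begin{equation*}
\langle a_1 \otimes b_1,\, a_2 \otimes b_2 \rangle := b_2^* \, \phi(a_2^* a_1) \, b_1,
\end{equation*}
extended linearly in the first argument and conjugate-linearly in the second. Properties (3)--(4) of Definition \ref{def;phm} are immediate from the formula.

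The first substantive task is positivity. For $x = \sum_{i=1}^n a_i \otimes b_i$ one has $\langle x, x\rangle = \sum_{i,j} b_j^* \phi(a_j^* a_i) b_i$. Since $[a_j^* a_i]_{i,j}$ is a positive element of $M_n(A)$ and $\phi$ is completely positive, $[\phi(a_j^* a_i)]_{i,j}$ is positive in $M_n(B)$; compressing by the column $(b_1,\ldots,b_n)^{\mathrm{T}}$ then shows that $\langle x, x\rangle$ lies in the positive cone of $B$. Passing to the quotient by $N := \{x : \langle x, x\rangle = 0\}$, which is a right $B$-submodule by a standard $B$-valued Cauchy--Schwarz inequality (itself a consequence of positivity alone), produces a genuine $B$-valued inner product on $(A \otimes B)/N$. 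Completing with respect to $\|x\|_X := \|\langle x, x\rangle\|^{1/2}$ yields the desired Hilbert $B$-module $X$.

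The representation is defined on elementary tensors by $\sigma(a)(a' \otimes b) := (aa') \otimes b$, and $e \in X$ is taken to be the class of $1_A \otimes 1_B$. The identity $\phi(a) = \langle \sigma(a) e, e\rangle$ is then the special case $a_1 = a$, $a_2 = 1_A$, $b_1 = b_2 = 1_B$ of the pairing. The density claim is automatic: $\sigma(a)(e\cdot b)$ represents the class of $a \otimes b$, and such tensors already span the dense subspace $(A \otimes B)/N$ of $X$.

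The main technical obstacle is to verify that each $\sigma(a)$ descends to the quotient, extends to a bounded operator on $X$, and is adjointable. Formal adjointness $\langle \sigma(a)x, y\rangle = \langle x, \sigma(a^*)y\rangle$ is transparent on elementary tensors. The crux is the module inequality
\begin{equation*}
\langle \sigma(a)x, \sigma(a)x\rangle \leq \|a\|^2 \, \langle x, x\rangle \quad \text{in } B,
\end{equation*}
which I would obtain by applying complete positivity of $\phi$ to the matrix inequality $[a_j^* a^* a\, a_i]_{i,j} \leq \|a\|^2 [a_j^* a_i]_{i,j}$ in $M_n(A)$ (itself a consequence of $\|a\|^2 \cdot 1_A - a^* a \geq 0$) and then compressing by the same column as before. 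This simultaneously guarantees that $\sigma(a)$ descends to $(A \otimes B)/N$ and extends by continuity to a bounded adjointable operator on $X$ of norm at most $\|a\|$; multiplicativity and the *-property of $\sigma$ then pass from the dense subspace to $X$ by continuity.
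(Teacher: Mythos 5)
Your construction is correct: it is the standard GNS/KSGNS-type argument (semi-inner product on $A\otimes B$, positivity via complete positivity of $\phi$ applied to $[a_j^*a_i]$, quotient by the null submodule using the $B$-valued Cauchy--Schwarz inequality, completion, and the bound $\langle\sigma(a)x,\sigma(a)x\rangle\le\|a\|^2\langle x,x\rangle$ to make $\sigma(a)$ well defined, bounded and adjointable), and it is compatible with the paper's convention $\langle x\cdot b,y\rangle=\langle x,y\rangle b$. Note that the paper itself gives no proof of this theorem --- it is quoted from Paschke \cite[Theorem 5.2]{Paschke} (only the uniqueness statement, Theorem \ref{thm;umpd}, is proved in the paper) --- and your argument is essentially Paschke's original construction, so there is nothing to add beyond the routine verification that the module structure and inner product pass to the completion.
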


This dilation is called as Paschke dilation of completely positive map. The property that the set $\{ \sigma(a)(eb) ~|~ a \in A, ~ b \in B \}$ spans a dense subspace of $X$ makes this dilation unique in the following sense. Since we were not able to locate a proof of the uniqueness in the literature, here we include the proof of the uniqueness for the sake of continuation. 

\begin{theorem}\label{thm;umpd}
Let $A$ and $B$ be unital C*-algebras, and $\phi \colon A \rightarrow B$ be a completely positive map. Suppose $(\sigma_i, e_i, X_i)$ for $i = 1 , 2$ be two triples in Paschke dilation for $\phi$, as given in Theorem \ref{thm;pd}. Then there exists a Hilbert module isomorphism $W : X_1 \rightarrow X_2$ such that $We_1 = e_2$, and $W \sigma_1(a) W^* = \sigma_2(a)$ for all $a \in A$.
\end{theorem}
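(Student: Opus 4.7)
The plan is to mimic the standard uniqueness proof for the Stinespring representation (Proposition \ref{prop;umsd}) in the Hilbert $B$-module setting. First I would define a candidate $W$ on the dense spanning set of $X_1$ by the natural formula
\begin{equation*}
W\bigl( \sigma_1(a)(e_1 b) \bigr) := \sigma_2(a)(e_2 b) \qquad (a \in A,\; b \in B),
\end{equation*}
and extend by linearity to finite sums. The core computation is the inner product identity
\begin{equation*}
\la \sigma_i(a)(e_i b),\, \sigma_i(a')(e_i b') \ra = b^* \la \sigma_i(a^* a') e_i, e_i \ra b' = b^* \phi(a^* a') b',
\end{equation*}
which is independent of $i \in \{1,2\}$. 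Because the $B$-valued inner product of a finite sum $\sum_j \sigma_i(a_j)(e_i b_j)$ with itself depends only on the matrix $[\phi(a_j^* a_k)]$ and on the $b_j$, not on the triple, $W$ is well-defined (a null sum in $X_1$ has zero self-inner-product, so its image in $X_2$ does as well, and hence vanishes) and preserves the $B$-valued inner product on the dense $B$-submodule.

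Next I would extend $W$ by continuity to an isometry $W \colon X_1 \to X_2$; this is automatic once the inner products are preserved. Right $B$-linearity is inherited directly from the defining formula, since $(\sigma_1(a)(e_1 b))b' = \sigma_1(a)(e_1(bb'))$ and analogously on the $X_2$ side. Density of $\{\sigma_2(a)(e_2 b) : a \in A,\, b \in B\}$ in $X_2$ gives surjectivity. A $B$-linear surjective map between Hilbert $B$-modules that preserves the $B$-valued inner product is automatically adjointable, with $W^* = W^{-1}$, so $W$ is a Hilbert module isomorphism.

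The remaining assertions fall out. Setting $a = 1_A$ and $b = 1_B$ gives $We_1 = e_2$. For the intertwining property, a direct check on the dense set,
\begin{equation*}
W \sigma_1(a)\bigl(\sigma_1(a')(e_1 b)\bigr) = W\bigl(\sigma_1(aa')(e_1 b)\bigr) = \sigma_2(aa')(e_2 b) = \sigma_2(a) W\bigl(\sigma_1(a')(e_1 b)\bigr),
\end{equation*}
extends by continuity to $W \sigma_1(a) = \sigma_2(a) W$ on all of $X_1$, equivalently $W \sigma_1(a) W^* = \sigma_2(a)$.

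The only step requiring real care is the well-definedness of $W$ on finite sums, because a single element of $X_1$ admits many representations as $\sum_j \sigma_1(a_j)(e_1 b_j)$. This is resolved uniformly by the identity
\begin{equation*}
\Bigl\la \sum_j \sigma_i(a_j)(e_i b_j),\; \sum_k \sigma_i(a_k)(e_i b_k) \Bigr\ra = \sum_{j,k} b_j^* \, \phi(a_j^* a_k) \, b_k,
\end{equation*}
whose right-hand side depends only on $\phi$ and the pairs $(a_j,b_j)$, not on $i$; so any null combination in $X_1$ maps to a null combination in $X_2$, and the extension goes through without further obstruction.
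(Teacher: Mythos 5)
Your proposal is correct and follows essentially the same route as the paper's proof: the same candidate map $W$ on the dense span $\{\sigma_1(a)(e_1b)\}$, with well-definedness and isometry coming from the fact that the $B$-valued inner products of such combinations depend only on $\phi$, and surjectivity from minimality of the second triple. In fact your write-up is somewhat more complete than the paper's, which only records the norm computation and leaves the inner-product preservation, adjointability, $We_1=e_2$, and the intertwining relation implicit.
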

\begin{proof}
We claim that the required Hilbert module isomorphism $W : X_1 \rightarrow X_2$ is given by:
\begin{equation*}
W\left (\sum_{i=1}^{n}\sigma_1(a_i)(e_1b_i) \right) = \sum_{i=1}^{n}\sigma_2(a_i)(e_2b_i).
\end{equation*}
We will verify that the above formula yields a well-defined isometry from $X_1$ to $X_2$. 
	
To this end, note that
\begin{align*}
\left \| \sum_{i=1}^{n} \sigma_1(a_i)(e_1b_i) \right \|^2 &= \left \|  \sum_{i=1}^{n} \sum_{j=1}^{n}  \la \sigma_1(a_i)(e_1b_i),  \sigma_1(a_j)(e_1b_j)  \ra \right  \| \\
&=  \left \| \sum_{i=1}^{n} \sum_{j=1}^{n}  \la \sigma_1(a^*_ja_i)e_1,  e_1  \ra b_ib^*_j  \right \| \\
&=  \left \|  \sum_{i=1}^{n} \sum_{j=1}^{n} \phi(a^*_ja_i)b_ib^*_j \right \|.
\end{align*} 
Similarly, we get
\begin{equation*}
\left \| \sum_{i=1}^{n}\sigma_2(a_i)(e_2b_i) \right \|^2 = \left  \|  \sum_{i=1}^{n} \sum_{j=1}^{n} \phi(a^*_ja_i)b_ib^*_j\right \|.
\end{equation*}
	
Since by the minimality condition, $W$ will have a dense range and hence onto.
Thus, $W$ is an isometry and this completes the proof.
\end{proof}

For a unital C*-algebra $B$, and a pre-Hilbert $B$-module $X$ define the set $X^\prime$ as follows:
\begin{equation*}
X^\prime := \{ \tau: X \rightarrow B ~|~ \tau ~ \text{is a bounded $B$-module map} \}.
\end{equation*} 
For each $x \in X$, we define a unique map $\hat{x} \colon X \rightarrow B$ as $\hat{x}(y) := \langle y, x \rangle$. If $X^\prime = \hat{X} = \{\hat{x} ~|~ x\in X \}$, then $X$ is said to be self-dual. If $B$ is a von-Neumann algebra, then $X^\prime$ is also a Hilbert $B$-module. Moreover, the $B$-valued inner product on $X$ extends to a $B$-valued inner product on $X^\prime$.

\begin{theorem} \cite[Theorem 3.2]{Paschke} \label{thm;xprime}
Let $B$ be a von-Neumann algebra and $X$ be a pre-Hilbert $B$-module. Then the $B$-valued inner product $\langle \cdot, \cdot \rangle$ on $X \times X$ extends to $X^\prime \times X^\prime$ in such a way as to make $X^\prime$ into a self-dual Hilbert $B$-module. Also, it satisfies $\langle \hat{x}, \tau \rangle = \tau(x)$ for $x \in X$, $\tau \in X^\prime$.
\end{theorem}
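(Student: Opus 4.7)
The plan is to construct the $B$-valued inner product on $X'$ in three stages: equip $X'$ with a right $B$-module structure compatible with the embedding $x \mapsto \hat{x}$, build the extended pairing by a weak-$\ast$ approximation argument that exploits the von Neumann algebra hypothesis on $B$, and then derive the Hilbert module axioms and self-duality. First, I would endow $X'$ with the right $B$-action $(\tau \cdot b)(x) := b^* \tau(x)$. Using property (4) of Definition \ref{def;phm}, the canonical map $x \mapsto \hat{x}$ becomes a $B$-module embedding of $X$ into $X'$. The prescribed formula $\langle \hat{x}, \tau\rangle = \tau(x)$ is consistent with the original inner product on $X$, since $\langle \hat{x}, \hat{y}\rangle = \hat{y}(x) = \langle x, y\rangle$, and conjugate symmetry then forces $\langle \tau, \hat{y}\rangle = \tau(y)^*$.

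For general $\tau, \sigma \in X'$ I would argue by weak-$\ast$ approximation. The operator-valued Cauchy--Schwarz inequality $\tau(x)^* \tau(x) \leq \|\tau\|^2 \langle x, x\rangle$ holds for any bounded $B$-module map $\tau$. Using this, together with a Hahn--Banach/net construction, one produces a bounded net $(x_\alpha) \subset X$ such that $\widehat{x_\alpha}(y) = \langle y, x_\alpha\rangle \to \tau(y)$ weak-$\ast$ in $B$ for each $y \in X$, and with $\|\langle x_\alpha, x_\alpha\rangle\|$ uniformly bounded. Weak-$\ast$ compactness of the closed unit ball of the von Neumann algebra $B$ then lets one pass to a weak-$\ast$ convergent subnet of $\sigma(x_\alpha) = \langle \widehat{x_\alpha}, \sigma\rangle$, and I would set
\begin{equation*}
\langle \tau, \sigma\rangle := \mathrm{w}^{*}\text{-}\lim_{\alpha} \sigma(x_\alpha).
\end{equation*}
Independence of the choice of net is forced by the fact that any two candidate limits pair identically against every $\hat{y} \in \hat{X}$, and $\hat{X}$ is separating in $X'$ via the rule $\tau(y) = \langle \hat{y}, \tau\rangle$.

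With the pairing in hand, I would verify the Hilbert module axioms: sesquilinearity from linearity of $\tau, \sigma$; conjugate symmetry from a symmetric choice of approximating nets; positivity of $\langle \tau, \tau\rangle$ from weak-$\ast$ closedness of the positive cone of $B$ applied to the positive elements $\langle x_\alpha, x_\alpha\rangle$; and non-degeneracy from $\tau(y) = \langle \hat{y}, \tau\rangle$. Completeness of $X'$ under the induced norm follows because a Cauchy sequence in $X'$ converges pointwise in $B$ to a bounded $B$-module map. For self-duality, any $T \in (X')'$ restricts along $\hat{X} \hookrightarrow X'$ to a bounded $B$-module map on $X$, yielding an element $\tau_T \in X'$ via $\tau_T(x) := T(\hat{x})^*$; the maps $T$ and $\sigma \mapsto \langle \widehat{\tau_T}, \sigma\rangle$ agree on $\hat{X}$ and are both bounded $B$-module maps on $X'$, hence they coincide.

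The main technical obstacle will be constructing the weak-$\ast$ approximating net for a given $\tau \in X'$ and verifying that the resulting limit is independent of the net. This is precisely where the von Neumann algebra hypothesis on $B$ is essential, supplying weak-$\ast$ compactness of bounded sets, weak-$\ast$ closedness of the positive cone, and the appropriate continuity properties of multiplication and involution on bounded subsets of $B$.
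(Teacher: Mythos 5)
Note first that the paper does not prove this statement at all: it is quoted verbatim from Paschke, so there is no in-paper argument to compare with, and your proposal has to stand on its own. Its overall shape (embed $X$ into $X'$, extend the pairing using the von Neumann algebra structure of $B$, then verify the axioms and self-duality) is reasonable, but the central technical step is not actually carried out, and the justification you give for it is circular. You define $\langle \tau, \sigma \rangle$ as a weak-$*$ limit of $\sigma(x_\alpha)$ along a bounded net with $\widehat{x_\alpha} \to \tau$ pointwise weak-$*$. Two problems: (i) the existence of such a bounded net is itself a nontrivial density theorem, not a routine Hahn--Banach application, and you give no construction; (ii) even granting the net, pointwise weak-$*$ convergence of $\widehat{x_\alpha}$ to $\tau$ gives no control whatsoever over $\sigma(x_\alpha)$ for a \emph{general} bounded module map $\sigma$ --- $\sigma$ is only norm-bounded, and nothing forces $\sigma(x_\alpha)$ to converge, let alone to a limit independent of the net. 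Your well-definedness argument (``any two candidate limits pair identically against every $\hat{y}$, and $\hat{X}$ is separating'') does not work: the candidate limits are elements of $B$, not of $X'$, and agreement against $\hat{X}$ only determines the value when $\sigma$ itself lies in $\hat{X}$. What you would actually need is that every $\sigma \in X'$ is automatically continuous on bounded subsets of $X$ for this weak topology, which is essentially the content of the theorem and is exactly what Paschke proves by a different device: he localizes at normal positive functionals $f \in B_*$, forms the Hilbert spaces $H_f$ obtained from the semi-inner products $f(\langle \cdot, \cdot \rangle)$, represents $\tau$ and $\sigma$ there, and defines $\langle \tau, \sigma \rangle$ as the element of $B = (B_*)^*$ given by these Hilbert-space pairings; positivity and Cauchy--Schwarz then come for free at the Hilbert-space level.

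The same gap propagates to your verifications. Positivity: by your definition $\langle \tau, \tau \rangle = \lim_\alpha \tau(x_\alpha)$, which differs from $\lim_\alpha \langle x_\alpha, x_\alpha \rangle$ by $(\tau - \widehat{x_\alpha})(x_\alpha)$, a quantity your pointwise convergence does not control because the argument moves with the index; so weak-$*$ closedness of the positive cone cannot be applied as claimed. Self-duality: ``$T$ and $\sigma \mapsto \langle \sigma, \tau_T \rangle$ agree on $\hat{X}$ and are bounded module maps, hence coincide'' is unjustified, since $\hat{X}$ is in general \emph{not} norm-dense in $X'$; one needs again the weak density of $\hat{X}$ together with continuity of the extended pairing in that topology. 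Finally, a smaller point: with the paper's conventions ($\langle x.b, y\rangle = \langle x, y \rangle b$, $\hat{x}(y) = \langle y, x \rangle$, $(\tau \cdot b)(x) = b^*\tau(x)$) the formula $\tau_T(x) := T(\hat{x})^*$ satisfies $\tau_T(xb) = b^*\tau_T(x)$ rather than $\tau_T(x)b$, so it is not an element of $X'$; the adjoint should be dropped.
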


We have $X$ as a submodule of $X^\prime$. Then the following proposition extends an adjointable operator on $X$ to an adjointable operator on $X^\prime$:

\begin{proposition} \cite[Proposition 3.6, Corollary 3.7]{Paschke} \label{prop;pext}
Let $B$ be a von-Neumann algebra and $X$ be a pre-Hilbert $B$-module, then each $T \in \mathcal{P}(X)$ extends to a unique $\tilde{T} \in \mathcal{P}(X^\prime)$. The map $T \mapsto \tilde{T}$ is a *-isomorphism of $\mathcal{P}(X)$ into $\mathcal{P}(X^\prime)$. Moreover, $\tilde{T}\hat{x} = \widehat{Tx}$ for all $x \in X$.
\end{proposition}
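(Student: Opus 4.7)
The plan is to define the extension by duality. For $T \in \mathcal{P}(X)$, with adjoint $T^* \in \mathcal{P}(X)$ guaranteed by hypothesis, set
\begin{equation*}
\tilde T(\tau) \;:=\; \tau \circ T^*, \qquad \tau \in X'.
\end{equation*}
Because $T^*$ is a bounded $B$-module map on $X$ and $\tau$ is a bounded $B$-module map from $X$ to $B$, the composition lies in $X'$, and $\|\tilde T(\tau)\| \leq \|T\|\,\|\tau\|$. Linearity in $\tau$ and right-$B$-linearity are immediate from the definition, and the claim $\tilde T \hat x = \widehat{Tx}$ drops out of
\begin{equation*}
\tilde T(\hat x)(y) \;=\; \hat x(T^* y) \;=\; \la T^* y, x\ra \;=\; \la y, Tx\ra \;=\; \widehat{Tx}(y).
\end{equation*}

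The main technical point is verifying that $\tilde T$ is adjointable as an operator on $X'$. Repeating the construction with $T^*$ in place of $T$ yields $\widetilde{T^*} : X' \to X'$, the natural candidate for $(\tilde T)^*$. On the canonical copy $\hat X \subset X'$ the adjoint identity is a direct calculation: using $\la \hat x_1, \hat x_2\ra = \la x_1, x_2\ra$, which follows from Theorem \ref{thm;xprime} applied with $\tau = \hat x_2$,
\begin{equation*}
\la \tilde T \hat x_1, \hat x_2\ra \;=\; \la \widehat{Tx_1}, \hat x_2\ra \;=\; \la Tx_1, x_2\ra \;=\; \la x_1, T^* x_2\ra \;=\; \la \hat x_1, \widetilde{T^*}\hat x_2\ra.
\end{equation*}
The obstacle is pushing this identity from $\hat X \times \hat X$ up to $X' \times X'$; the argument rests on weak-$*$ density of $\hat X$ in the self-dual module $X'$, together with separate continuity of the extended $B$-valued inner product in the weak-$*$ topology induced by the predual of the von Neumann algebra $B$ (both built into the construction that establishes Theorem \ref{thm;xprime}). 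Fixing $\tau_2 \in X'$, the two sides of $\la \tilde T \tau_1, \tau_2\ra = \la \tau_1, \widetilde{T^*} \tau_2\ra$ agree on the dense set $\tau_1 \in \hat X$ and are continuous in $\tau_1$; the symmetric argument with $\tau_1$ fixed completes the verification and shows $(\tilde T)^* = \widetilde{T^*}$.

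That $T \mapsto \tilde T$ is a unital $*$-homomorphism is then a formal check from the defining formula: $\widetilde{ST}(\tau) = \tau \circ (ST)^* = \tau \circ T^* \circ S^* = \tilde S(\tilde T(\tau))$, and the $*$-preservation was just verified. Injectivity follows from $\tilde T \hat x = \widehat{Tx}$, since $\tilde T = 0$ would force $Tx = 0$ for every $x \in X$. For the uniqueness of the extension, suppose $S \in \mathcal{P}(X')$ also satisfies $S \hat x = \widehat{Tx}$ for all $x \in X$; then for every $x \in X$ and every $\tau \in X'$,
\begin{equation*}
\bigl((S - \tilde T)^* \tau\bigr)(x) \;=\; \la \hat x, (S - \tilde T)^* \tau\ra \;=\; \la (S - \tilde T)\hat x, \tau\ra \;=\; 0,
\end{equation*}
so $(S - \tilde T)^* \tau \in X'$ vanishes on all of $X$ and is therefore the zero functional; as $\tau$ was arbitrary, $(S - \tilde T)^* = 0$ and hence $S = \tilde T$.
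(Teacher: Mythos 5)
This statement is quoted from Paschke's paper; the article you are checking against gives no proof of it (it only remarks that the identity $\tilde{T}\hat{x} = \widehat{Tx}$ can be read off from Paschke's argument), so the relevant comparison is with the standard route, which obtains adjointability of $\tilde{T}$ from the self-duality of $X^\prime$. Most of your proposal is fine: the definition $\tilde{T}\tau := \tau \circ T^*$, the verification that $\tilde{T}\tau \in X^\prime$ with $\|\tilde{T}\tau\| \leq \|T\|\|\tau\|$, the computation $\tilde{T}\hat{x} = \widehat{Tx}$, the multiplicativity and injectivity checks, and the uniqueness argument are all correct and consistent with the paper's conventions.

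The gap is exactly at the step you flag as the main technical point. To pass the adjoint identity from $\hat{X}$ to all of $X^\prime \times X^\prime$ you fix $\tau_2$ and claim both sides are continuous in $\tau_1$ for the weak topology in which $\hat{X}$ is dense. Separate continuity of the extended inner product gives continuity of $\tau_1 \mapsto \la \tau_1, \mu \ra$ for fixed $\mu$ (so the right-hand side is fine), but it does not give continuity of the composite $\tau_1 \mapsto \la \tilde{T}\tau_1, \tau_2 \ra$: for that you must commute $\tilde{T}$ with weak limits, i.e.\ you need $\tilde{T}$ to be continuous for this topology, and for a module map on $X^\prime$ that is essentially equivalent to its having an adjoint --- the very thing being proved. (Your direct computation does give, with no density, $\la \tilde{T}\tau, \hat{y} \ra = \la \tau, \widehat{T^*y} \ra$ for \emph{all} $\tau \in X^\prime$ and $y \in X$; but extending in the second slot runs into the same unjustified continuity of $\widetilde{T^*}$.) So as written the density argument is circular. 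The repair is short and is the point of citing Theorem \ref{thm;xprime}: for fixed $\mu \in X^\prime$ the map $\tau \mapsto \la \tilde{T}\tau, \mu \ra$ is a bounded $B$-module map from $X^\prime$ into $B$, hence by self-duality of $X^\prime$ it equals $\la \cdot, S\mu \ra$ for a unique $S\mu \in X^\prime$; this $S$ is the adjoint of $\tilde{T}$, and since $(\tilde{T})^*\hat{x} = \widehat{T^*x}$ by your computation, your own uniqueness argument identifies $(\tilde{T})^* = \widetilde{T^*}$, after which the $*$-homomorphism property follows as you wrote.
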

The last equality $\tilde{T}\hat{x} = \widehat{Tx}$, for all $x \in X$ can be observed from the proof of \cite[Proposition 3.6]{Paschke}.

Let $A$ be a unital C*-algebra and $B$ be a von Neumann algebra. Suppose $\phi : A \rightarrow B$ is a completely positive map. By Theorem \ref{thm;pd}, we get the triple $(X, \sigma, e)$ as the Paschke dilation of $\phi$. Then using Proposition \ref{prop;pext}, we can define a *-representation $\tilde{\sigma}: A \rightarrow \mathcal{P}(X^\prime)$ by $\tilde{\sigma}(a) := \widetilde{\sigma(a)}$. For $T \in \mathcal{P}(X^\prime)$, define a linear map $\phi_T : A \rightarrow B$ by $\phi_T(a) := \langle T \tilde{\sigma}(a) \hat{e}, \hat{e} \rangle$ for $a \in A$. In the setting of Hilbert module, we have the following Radon--Nikodym type result due to W. Paschke.

\begin{theorem} \cite[Proposition 5.4]{Paschke} \label{thm;prnd}
There is an affine order isomorphism of $\{ T \in  \tilde{\sigma}(A)^\prime \hspace{5pt}| \hspace{5pt} 0\leq T \leq 1_{X^\prime} \}$ onto $[0, \phi] = \{ \theta \in CP(A, B) \hspace{5pt} | \hspace{5pt} \theta \leq \phi \}$ which is given by the map $T \mapsto \phi_T$. 
\end{theorem}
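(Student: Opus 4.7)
The plan is to verify that $T \mapsto \phi_T$ lands in $[0,\phi]$ and is affine and order-preserving, and then to prove injectivity and surjectivity; the last of these is where the main content lies.

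First I would handle well-definedness and order. Fix $T \in \tilde{\sigma}(A)'$ with $0 \le T \le 1_{X'}$. Since $\tilde{\sigma}(A)'$ is a C*-subalgebra of $\mathcal{P}(X')$, the positive square roots $T^{1/2}$ and $(1_{X'}-T)^{1/2}$ lie in $\tilde{\sigma}(A)'$. From $T = T^{1/2}T^{1/2}$ and commutation with $\tilde{\sigma}(a)$,
\[
\phi_T(a) = \langle \tilde{\sigma}(a)\, T^{1/2}\hat{e},\, T^{1/2}\hat{e}\rangle,
\]
which is completely positive by the construction underlying Theorem~\ref{thm;pd} applied to the representation $\tilde{\sigma}$ and the vector $T^{1/2}\hat{e} \in X'$. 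The analogous computation with $(1_{X'}-T)^{1/2}$ identifies $\phi - \phi_T$ with $\phi_{1_{X'}-T}$, which is completely positive for the same reason, so $\phi_T \le \phi$. Affineness is immediate, and the same square-root trick applied to $T_2 - T_1 \ge 0$ yields order preservation.

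For injectivity, I would suppose $\phi_{T_1} = \phi_{T_2}$ and set $S := T_1 - T_2 \in \tilde{\sigma}(A)'$, which is self-adjoint. Using $S\tilde{\sigma}(a) = \tilde{\sigma}(a)S$ and the inner-product identities $\langle x\cdot b, y\rangle = \langle x, y\rangle b$ and $\langle x, y\cdot b\rangle = b^* \langle x, y\rangle$,
\[
\langle S\,\tilde{\sigma}(a_1)(\hat{e}\cdot b_1),\, \tilde{\sigma}(a_2)(\hat{e}\cdot b_2)\rangle = b_2^{\,*}\,\langle S\,\tilde{\sigma}(a_2^{\,*}a_1)\hat{e},\, \hat{e}\rangle\, b_1 = 0
\]
for all $a_i \in A$ and $b_i \in B$. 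The minimality clause of Theorem~\ref{thm;pd} makes the vectors $\sigma(a)(eb)$ span a dense subspace of $X$, giving $\langle S\hat{x}_1,\hat{x}_2\rangle = 0$ for all $x_1,x_2 \in X$; the identity $\langle \hat{x},\tau\rangle = \tau(x)$ from Theorem~\ref{thm;xprime} then forces $S\hat{x}_1 = 0$ in $X'$, and a further use of self-adjointness of $S$ extends this to $S\eta = 0$ for every $\eta \in X'$.

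Surjectivity is the main step. Given $\theta \in [0,\phi]$, I would define a $B$-valued sesquilinear form on $X$ by
\[
[\,\sigma(a_1)(eb_1),\, \sigma(a_2)(eb_2)\,]_\theta := b_2^{\,*}\,\theta(a_2^{\,*}a_1)\, b_1,
\]
extended by linearity. Since $\phi - \theta$ is completely positive, the identity $[x,x]_\phi = [x,x]_\theta + [x,x]_{\phi-\theta}$ yields $0 \le [x,x]_\theta \le \langle x,x\rangle_X$; Cauchy--Schwarz for positive semi-definite $B$-valued forms then makes $[x,y]_\theta$ depend only on the classes of $x,y$ in $X$ and makes $[\cdot, y]_\theta$ a bounded $B$-module map, i.e., an element of $X'$, for each $y \in X$. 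Self-duality of $X'$ (Theorem~\ref{thm;xprime}) next produces a unique $T \in \mathcal{P}(X')$ with $\langle T\hat{x}_1, \hat{x}_2\rangle = [x_1, x_2]_\theta$; the bound $0 \le T \le 1_{X'}$ follows from the domination, and commutation with $\tilde{\sigma}(A)$ is checked on $\hat{X}$ via
\[
\langle T\tilde{\sigma}(a)\hat{x}_1, \hat{x}_2\rangle = [\sigma(a)x_1, x_2]_\theta = [x_1, \sigma(a^{\,*})x_2]_\theta = \langle T\hat{x}_1, \tilde{\sigma}(a^{\,*})\hat{x}_2\rangle,
\]
and then propagated to $X'$ by self-duality. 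Taking $x_1 = x_2 = e$ and inserting $\tilde{\sigma}(a)$ delivers $\phi_T(a) = \theta(a)$. The hard part throughout is precisely this passage from sesquilinear data on the (not necessarily self-dual) module $X$ to a bona fide adjointable operator on $X'$: both Theorem~\ref{thm;xprime} and Proposition~\ref{prop;pext} are essential, first to manufacture $T$ from its values on $\hat{X}$ and then to transport both its norm bound and the commutation identity $T\tilde{\sigma}(a) = \tilde{\sigma}(a)T$ from $\hat{X}$ to all of $X'$.
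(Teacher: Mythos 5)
The paper does not prove this statement at all: it is imported verbatim as Paschke's Proposition 5.4, so there is no in-paper proof to compare against. Your outline is correct and is essentially Paschke's original argument --- the square-root trick $\phi_T(a)=\la \tilde{\sigma}(a)T^{1/2}\hat{e},T^{1/2}\hat{e}\ra$ for well-definedness and order, and for surjectivity the $B$-valued form dominated by $\la\cdot,\cdot\ra$ together with the self-duality/extension machinery of Theorem \ref{thm;xprime} and Proposition \ref{prop;pext} to manufacture $T\in\mathcal{P}(X^\prime)$ and transport the bound $0\le T\le 1_{X^\prime}$ and the relation $T\tilde{\sigma}(a)=\tilde{\sigma}(a)T$ from $\hat{X}$ to $X^\prime$ --- with the only thinly sketched point being precisely that last propagation step, which in Paschke's treatment rests on the weak density of $\hat{X}$ in $X^\prime$ and the continuity properties of the extended inner product.
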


\subsection{Generalized Orthogonal Measures:}
Here we quickly recall results from the theory of orthogonal decomposition of UCP maps established in \cite{BK3}. The article \cite{BK3} extensively uses results from the theory of disintegration of representations of separable C*-algebra. For a detail introduction to the theory of disintegration of representations of separable C*-algebra, one may refer to \cite{DixC, DixvNA, KR2, Takesaki1}.

Let $A$ be a unital, separable C*-algebra, and $H$ be a separable Hilbert space. We set the following notation:
\begin{equation*}
\ucpa := \{ \phi : A \rightarrow B(H) ~|~ \phi ~~ \text{is unital and completely positive} \}.
\end{equation*}  
The set $\ucpa$ is a compact, convex subset of all completely bounded maps from $A$ into $B(H)$. Suppose $M_1(\ucpa)$ denotes the set of all positive Borel measures on $\ucpa$ with norm 1. For a fixed $\phi \in \ucpa$, we define the set
\begin{equation*}
M_\phi(\ucpa) := \left \{ \mu \in M_1(\ucpa) ~\mid~ \phi = \int_{\ucpa} \phi^\prime \, \mathrm{d} \mu \right \},
\end{equation*}
where the integral is taken in a weak sense. That is, for all $a \in A$ and $h_1, h_2 \in H$, we have $\la \phi(a) h_1, h_2 \ra = \int_{\ucpa} \la \phi^\prime(a)h_1, h_2 \ra  \, \mathrm{d} \mu$. If this happens, then we say that the $\phi$ is the barycenter of $\mu$. Now we recall the notion of orthogonality between two completely positive maps from $A$ having ranges into $B(H)$. 

\begin{definition}\cite[Definition 3.1]{BK3}\label{def;ocp}
Let $\phi_1$ and $\phi_2$ be two completely positive maps from $A$ into $B(H)$. Suppose $\phi_1 + \phi_2 = \phi$ and  $V^* \rho V$ is the minimal Stinespring dilation of $\phi$. Then we say, $\phi_1$ is orthogonal to $\phi_2$, and denoted by $\phi_1 \perp \phi_2$, if there exists a projection $P \in \racom$ such that 
\begin{equation*}
\phi_1 =  V^* P \rho V  \hspace{10pt} \text{and} \hspace{10pt} \phi_2 =  V^* (1-P) \rho V.
\end{equation*}
\end{definition}

Using the above definition, for fixed $\phi \in \ucpa$ we recall the definition of a generalized orthogonal measure.

\begin{definition}\cite[Definition 3.2]{BK3}\label{def;om}
Let $\mu \in M_\phi(\ucpa)$. Then we say that the measure $\mu$ is a generalized orthogonal measure if, for every Borel measurable subset $E$ of $\ucpa$, we get 
\begin{equation*}
\int\limits_{E} \phi^\prime \, \mathrm{d} \mu \perp \int\limits_{\ucpa \setminus E} \phi^\prime \, \mathrm{d} \mu.
\end{equation*}
\end{definition}

Hereafter in this article, a generalized orthogonal measure will be referred to as an orthogonal measure (on $\ucpa$) for ease of readability. We denote the set of all orthogonal measures with the barycenter $\phi$ by $\mathcal{O}_\phi(\ucpa)$. We have the following containment between the three sets:
\begin{equation*}
\mathcal{O}_\phi(\ucpa) \subseteq M_\phi(\ucpa) \subseteq M_1(\ucpa).
\end{equation*}

Let $\phi$ be in $\ucpa$, and $V^* \rho V$ be the minimal Stinespring dilation of $\phi$. For an arbitrary but fixed $\mu \in M_\phi(\ucpa)$, we give a unique map from $\text{L}^\infty(\ucpa, \mu)$ into $\racom$ satisfying certain conditions. On $\text{L}^\infty(\ucpa, \mu)$ we consider $\sigma(\text{L}^\infty, \text{L}^1)$ topology, that is $f_\alpha \rightarrow f$ in $\sigma(\text{L}^\infty, \text{L}^1)$ topology if and only if
\begin{equation*}
\int\limits_{\ucpa} f_\alpha g \, \mathrm{d} \mu \rightarrow \int\limits_{\ucpa} f g \, \mathrm{d} \mu
\end{equation*}
for all $g \in \text{L}^1(\ucpa, \mu)$. The algebra $\racom$ is considered with weak operator topology (WOT).

\begin{lemma}\cite[Lemma 3.4]{BK3}\label{lem;kmu}
Let $\phi \in \ucpa$ and $\mu$ is an element of $M_\phi(\ucpa)$. Suppose $V^* \rho V$ is the minimal Stinespring dilation of $\phi$. Then there exists a unique map 
\begin{equation*}
k_\mu : \text{L}^\infty(\ucpa, \mu) \rightarrow \racom
\end{equation*}
given by 
\begin{equation*}
\la k_\mu(f) \rho(a) Vh_1, \rho(b) Vh_2 \ra = \int\limits_{\ucpa} f(\phi^\prime) \la \phi^\prime (b^*a)h_1, h_2 \ra \, \mathrm{d} \mu,
\end{equation*}
where $a, b \in A$ and $h_1, h_2 \in H$. The map is positive, contractive and continuous in $\sigma(\text{L}^\infty, \text{L}^1) - \text{WOT}$. 
\end{lemma}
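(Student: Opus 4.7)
The plan is to bootstrap from Arveson's Radon--Nikodym theorem (Theorem \ref{thm;arnd}) applied to a one-parameter family of subordinate completely positive maps. For any $f \in \text{L}^\infty(\ucpa, \mu)$ with $0 \le f \le 1$, I would define $\phi_f : A \to B(H)$ weakly by
\[
\la \phi_f(a) h_1, h_2 \ra := \int_{\ucpa} f(\phi') \la \phi'(a) h_1, h_2 \ra \, d\mu.
\]
Since $\mu \in M_\phi(\ucpa)$, both $\phi_f$ and $\phi - \phi_f = \phi_{1-f}$ are completely positive, so $\phi_f \in [0, \phi]$. Theorem \ref{thm;arnd} then produces a unique operator $T_f \in \racom$ with $0 \le T_f \le 1_K$ satisfying $\phi_f(x) = V^* \rho(x) T_f V$ for all $x \in A$. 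Exploiting $T_f \in \racom$, one computes
\[
\la T_f \rho(a) Vh_1, \rho(b) Vh_2 \ra = \la T_f \rho(b^*a) Vh_1, Vh_2 \ra = \la \phi_f(b^*a) h_1, h_2 \ra,
\]
which matches exactly the right-hand side of the required defining relation. Setting $k_\mu(f) := T_f$ in this range thus delivers the operator on $[0,1]$-valued functions.

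Next I would extend to all of $\text{L}^\infty$ by linearity: write $f = (f_1 - f_2) + i (f_3 - f_4)$ with $0 \le f_j \le \|f\|_\infty$, and set $k_\mu(f) := T_{f_1} - T_{f_2} + i (T_{f_3} - T_{f_4})$; this is well-defined by the uniqueness part of Theorem \ref{thm;arnd} and is evidently linear. Uniqueness of $k_\mu$ as a map satisfying the prescribed formula is then immediate because Stinespring minimality guarantees that $\{\rho(a) Vh : a \in A, h \in H\}$ spans a dense subspace of $K$, so the given sesquilinear form completely determines any candidate operator in $\racom$. Positivity ($f \ge 0 \Rightarrow k_\mu(f) \ge 0$) is built into the construction, and $k_\mu(1) = 1_K$ because $\phi_1 = \phi = V^* \rho V$. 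Since $k_\mu$ is a positive unital linear map out of a commutative C*-algebra, it is automatically contractive, giving $\|k_\mu(f)\| \le \|f\|_\infty$.

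Continuity in the $\sigma(\text{L}^\infty, \text{L}^1)$--WOT sense can then be read off the defining formula. For fixed $a, b \in A$ and $h_1, h_2 \in H$, the function $\phi' \mapsto \la \phi'(b^*a) h_1, h_2 \ra$ is bounded by $\|b^*a\|\|h_1\|\|h_2\|$ since UCP maps are contractive, hence lies in $\text{L}^1(\ucpa, \mu)$ (as $\mu$ is a probability measure). Consequently, $f \mapsto \la k_\mu(f) \rho(a) Vh_1, \rho(b) Vh_2 \ra$ is $\sigma(\text{L}^\infty, \text{L}^1)$-continuous on the total set of vectors of $K$. The main technical point I expect to spend the most care on is promoting this to WOT-continuity for arbitrary vectors in $K$: one approximates by finite sums from the Stinespring spanning set and then invokes the uniform bound $\|k_\mu(f)\| \le \|f\|_\infty$, combined with uniform boundedness of weak-$*$ convergent nets on norm-bounded subsets of $\text{L}^\infty$, to justify the interchange of limits. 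Everything else is a direct transcription of Arveson's Radon--Nikodym theorem together with the uniqueness of the minimal Stinespring dilation.
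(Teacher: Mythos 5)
This lemma is quoted in the paper from \cite[Lemma 3.4]{BK3} and is not proved here, so there is no in-paper argument to compare against; judged on its own terms, your construction is the natural one and is essentially sound. Defining $\phi_f$ weakly for $0\le f\le 1$, checking $\phi_f,\ \phi_{1-f}\in \cpa$ so that $\phi_f\in[0,\phi]$, and pulling back through Theorem \ref{thm;arnd} to get $T_f\in\racom$ with $\la T_f\rho(a)Vh_1,\rho(b)Vh_2\ra=\int f(\phi')\la\phi'(b^*a)h_1,h_2\ra\,\mathrm{d}\mu$ gives existence; minimality of $K$ gives uniqueness and also well-definedness of the linear extension (note only that Theorem \ref{thm;arnd} as stated covers $0\le T\le 1_K$, so for $0\le f_j\le\|f\|_\infty$ you should rescale, $T_{f_j}:=\|f\|_\infty T_{f_j/\|f\|_\infty}$, or simply define everything through the sesquilinear relation on the dense span). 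Positivity, unitality ($k_\mu(1)=1_K$ since $\mu$ has barycenter $\phi$), and hence contractivity via Russo--Dye are fine.

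The one step that does not work as written is the passage from continuity on the Stinespring spanning vectors to full $\sigma(\text{L}^\infty,\text{L}^1)$--WOT continuity. Your interchange-of-limits argument needs a uniform bound on $\|f_\alpha\|_\infty$ along the net, but a $\sigma(\text{L}^\infty,\text{L}^1)$-convergent \emph{net} in $\text{L}^\infty(\ucpa,\mu)$ need not be norm bounded (unlike sequences), so "uniform boundedness of weak-$*$ convergent nets" is not available and your argument only yields continuity on norm-bounded subsets. The clean repair is to show that every matrix coefficient of $k_\mu$ is itself a weak-$*$ continuous functional: for $x,y\in K$ choose $x_n\to x$, $y_n\to y$ from the span $\{\rho(a)Vh\}$, let $g_{x_n,y_n}\in\text{L}^1(\ucpa,\mu)$ be the densities with $\la k_\mu(f)x_n,y_n\ra=\int f\,g_{x_n,y_n}\,\mathrm{d}\mu$, and observe from $\|g_{x_n,y_n}-g_{x_m,y_m}\|_1\le\|x_n-x_m\|\,\|y_n\|+\|x_m\|\,\|y_n-y_m\|$ (take the supremum over $\|f\|_\infty\le1$) that these are Cauchy in $\text{L}^1$; the limit $g_{x,y}$ then represents $f\mapsto\la k_\mu(f)x,y\ra$, and continuity for arbitrary nets follows at once. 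With that substitution the proposal proves the lemma.
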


For fixed $\phi \in \ucpa$, the following proposition characterizes the orthogonal measures $\mu$ belonging to $M_\phi(\ucpa)$ by using the map $k_\mu$ defined above.

\begin{proposition}\cite[Proposition 3.5]{BK3}\label{prop;om}
Let $\mu \in M_\phi(\ucpa)$ and $V^* \rho V$ be the minimal Stinespring dilation of $\phi$. Then the following statements are equivalent
\begin{enumerate}
\item The measure $\mu$ is in $\mathcal{O}_\phi(\ucpa)$.
\item The map $f \mapsto k_\mu(f)$ is a *-isomorphism from $\text{L}^\infty(\ucpa, \mu)$ onto the range of $k_\mu$ in $\acom$.
\item The map $f \mapsto k_\mu(f)$ is a *-homomorphism.
\end{enumerate}
\end{proposition}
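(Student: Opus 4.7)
The plan is to establish the cycle $(2) \Rightarrow (3) \Rightarrow (1) \Rightarrow (2)$. The implication $(2) \Rightarrow (3)$ is automatic, since every *-isomorphism is in particular a *-homomorphism. So the work is in the remaining two implications.

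For $(3) \Rightarrow (1)$, I fix an arbitrary Borel set $E \subseteq \ucpa$. Since $\chi_E$ is a self-adjoint idempotent in $\text{L}^\infty(\ucpa, \mu)$ and $k_\mu$ is assumed to be a *-homomorphism, $P := k_\mu(\chi_E)$ is a projection in $\racom$. Specializing the defining identity of Lemma \ref{lem;kmu} to $b = 1$, and using that $V$ is an isometry since $\phi$ is unital, gives
\begin{equation*}
\la V^* P \rho(a) V h_1, h_2 \ra = \int_E \la \phi^\prime(a) h_1, h_2 \ra \, \mathrm{d}\mu,
\end{equation*}
so $V^* P \rho V = \int_E \phi^\prime \, \mathrm{d}\mu$. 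Combined with $\phi = \int_\ucpa \phi^\prime \, \mathrm{d}\mu$, this yields $\int_{\ucpa \setminus E} \phi^\prime \, \mathrm{d}\mu = V^*(I-P)\rho V$, which is exactly the orthogonal decomposition demanded by Definition \ref{def;ocp}. Hence $\mu \in \mathcal{O}_\phi(\ucpa)$.

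For $(1) \Rightarrow (2)$, I would start again with characteristic functions. By orthogonality, for each Borel $E$ there is a projection $P_E \in \racom$ with $\int_E \phi^\prime \, \mathrm{d}\mu = V^* P_E \rho V$; comparing this with the defining formula for $k_\mu$ and using minimality of $(\rho, V, K)$ to conclude that the vectors $\rho(a) Vh$ span a dense subspace of $K$, I identify $k_\mu(\chi_E) = P_E$. Next, I verify multiplicativity on characteristic functions: when $E \cap F = \emptyset$, $P_E + P_F = P_{E \cup F}$ is a projection, forcing $P_E P_F = 0$; decomposing a general pair via $E = (E \cap F) \sqcup (E \setminus F)$ and similarly for $F$ gives $P_E P_F = P_{E \cap F}$. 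By linearity, $k_\mu$ is then a *-homomorphism on simple functions, with self-adjointness $k_\mu(\bar f) = k_\mu(f)^*$ read off by complex-conjugating the defining formula and swapping $a \leftrightarrow b$. To lift multiplicativity from simple functions to all of $\text{L}^\infty(\ucpa, \mu)$, I approximate $f$ by simple $f_n$ in the $\sigma(\text{L}^\infty, \text{L}^1)$ topology; for fixed $g \in \text{L}^\infty$, $f_n g \to f g$ in the same topology, so the $\sigma(\text{L}^\infty, \text{L}^1)$-WOT continuity of $k_\mu$ from Lemma \ref{lem;kmu} combined with separate WOT-continuity of operator multiplication yields $k_\mu(fg) = k_\mu(f) k_\mu(g)$ on all of $\text{L}^\infty(\ucpa, \mu)$. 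For injectivity, if $k_\mu(f) = 0$ then $k_\mu(|f|^2) = k_\mu(f)^* k_\mu(f) = 0$, and setting $a = b = 1$ in the defining formula (using $\phi^\prime(1) = I$) gives $\la h_1, h_2 \ra \int |f|^2 \, \mathrm{d}\mu = 0$, forcing $f = 0$ in $\text{L}^\infty(\ucpa, \mu)$.

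The main technical obstacle I foresee is precisely the passage from simple functions to all of $\text{L}^\infty$, because multiplication on $\text{L}^\infty$ is only separately $\sigma(\text{L}^\infty, \text{L}^1)$-continuous, not jointly. The remedy is to approximate one factor at a time, exploiting that $k_\mu$ is contractive (so norm-bounded approximating nets suffice) and that right- and left-multiplication by a fixed operator are individually WOT-continuous on bounded sets in $\racom$. With this in hand the full equivalence $(1) \Leftrightarrow (2) \Leftrightarrow (3)$ falls out of the cycle.
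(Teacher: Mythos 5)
Your argument is correct, but note that the paper itself gives no proof of this proposition: it is recalled verbatim from \cite[Proposition 3.5]{BK3}, so there is no internal proof to compare against. Your cycle $(2)\Rightarrow(3)\Rightarrow(1)\Rightarrow(2)$ is the natural adaptation of the classical orthogonal-measure argument (the analogue of \cite[Proposition 4.1.22]{BR1}) and each step checks out: in $(3)\Rightarrow(1)$ the projection $k_\mu(\chi_E)\in\acom$ together with $\phi=\int\phi'\,\mathrm{d}\mu$ gives exactly the decomposition required by Definition \ref{def;ocp}; in $(1)\Rightarrow(2)$ the identification $k_\mu(\chi_E)=P_E$ via minimality and $P_E\in\acom$ is valid, the additivity-of-projections trick yields multiplicativity on characteristic and hence simple functions, adjoint-preservation follows from the defining formula since each $\phi'$ is $*$-preserving, and the injectivity argument with $a=b=1$ (using that every $\phi'$ in $\ucpa$ is unital) is fine.

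One remark on the step you flag as the main obstacle: the passage from simple functions to all of $\text{L}^\infty(\ucpa,\mu)$ does not require the $\sigma(\text{L}^\infty,\text{L}^1)$--WOT continuity of $k_\mu$ at all. Simple functions are dense in $\text{L}^\infty$ in the \emph{norm} topology (uniform approximation of bounded measurable functions), $k_\mu$ is contractive and hence norm-continuous, and operator multiplication is jointly norm-continuous; so multiplicativity extends immediately. Your one-factor-at-a-time weak-topology argument also works (separate WOT continuity of multiplication by a fixed operator is all you use), but it is heavier machinery than the situation demands.
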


\begin{remark}\cite[Remark 3.6]{BK3}\label{rem;avna}
If one of the conditions (and hence the other two) in the previous proposition is satisfied, then $\clB_\mu = \{ k_\mu(f) ~|~ f \in \text{L}^\infty(\ucpa, \mu) \}$ is an abelian von Neumann subalgebra of $\racom$.
\end{remark}

Now we consider a special class of abelian subalgebras called orthogonal abelian subalgebras defined in \cite{BK3}.
\begin{definition}\cite[Definition 3.10]{BK3}\label{def;oasa}
Let $\phi \in \ucpa$, where $V^* \rho V$ be the minimal Stinespring dilation of $\phi$ with $V : H \rightarrow K$. Let $\clB$ be an abelian subalgebra of $\acom$. Suppose $K$ and $\rho$ disintegrate as $\diX K_p \, \mathrm{d} \nu$ and $\diX \rho_p \, \mathrm{d} \nu$ respectively with respect to the abelian subalgebra $\clB$. Then we say $\clB$ is an orthogonal abelian subalgebra of $\acom$ if;
\begin{enumerate}
\item the operator $V : H \rightarrow K = \diX K_p \, \mathrm{d} \nu$ can be written as $V = \diX V_p \, \mathrm{d} \nu$, where $V_p : H \rightarrow K_p$ is an isometry for almost every $p \in X$;
\item the abelian algebra $\text{L}^\infty(\ucpa, \mu_\clB)$ is isomorphic to an algebra $\text{L}^\infty(X, \nu)$, where $\mu_\clB$ is the pushforward measure defined on $\ucpa$ using the measurable map say $g : X \setminus X_0 \rightarrow \ucpa$ defined as $p \mapsto V_p^* \rho_p V_p$, where $X_0$ is the $\nu$ measure zero set consisting of those $p$ such that $V_p$ is not isometry or $\rho_p$ is not unital.
\end{enumerate} 
\end{definition}

Lastly, we mention the main theorem in \cite{BK3}, that gives a one-to-one correspondence between orthogonal measures with orthogonal abelian subalgebras.
\begin{theorem}\cite[Theorem 3.12]{BK3}\label{thm;omoas}
Let $\phi \in \ucpa$ and $V^* \rho V$ be the minimal Stinespring dilation of $\phi$. Then there is a one-to-one correspondence between the following sets:
\begin{enumerate}
\item the orthogonal measures $\mu$ with $\phi$ as the barycenter, that is $\mu \in  \mathcal{O}_\phi(\ucpa)$.
\item the orthogonal abelian subalgebras, $\clB \subseteq \acom$.
\end{enumerate} 
\end{theorem}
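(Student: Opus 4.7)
The plan is to construct the correspondence in both directions and then check that the two assignments are mutually inverse. For the forward direction, let $\mu \in \mathcal{O}_\phi(\ucpa)$. By Proposition \ref{prop;om} together with Remark \ref{rem;avna}, the image $\clB_\mu := k_\mu(\text{L}^\infty(\ucpa, \mu))$ is an abelian von Neumann subalgebra of $\acom$, and $k_\mu$ is a $*$-isomorphism onto it. To verify that $\clB_\mu$ is orthogonal in the sense of Definition \ref{def;oasa}, invoke the standard disintegration of a representation of a separable C*-algebra with respect to an abelian subalgebra of its commutant: this yields a standard Borel space $(X, \nu)$, decompositions $K = \diX K_p \, \mathrm{d}\nu$ and $\rho = \diX \rho_p \, \mathrm{d}\nu$ diagonalizing $\clB_\mu$, and a natural $*$-isomorphism $\text{L}^\infty(X,\nu) \cong \clB_\mu$. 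Composing with $k_\mu^{-1}$ identifies $(X,\nu)$ with $(\ucpa,\mu)$ up to null sets, so $X$ may be taken to be $\ucpa$ itself.

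Next I would decompose $V$ as $V = \diX V_p \, \mathrm{d}\nu$ by writing $V h = \rho(1_A) V h$ and using the disintegration of $\rho$; that $V_p$ is an isometry for $\nu$-a.e.\ $p$ follows, after discarding a null set $X_0$, from the defining relation of $k_\mu$ applied to $f = \chi_E$ and $a = b = 1_A$, which shows that the field $p \mapsto \la V_p h, V_p h\ra$ coincides $\nu$-a.e.\ with the constant $\|h\|^2$. The crucial computation is then to insert the disintegration into the formula
\begin{equation*}
\la k_\mu(f)\rho(a)Vh_1, \rho(b)Vh_2\ra = \int_{\ucpa} f(\phi^\prime) \la \phi^\prime(b^*a)h_1, h_2\ra \, \mathrm{d}\mu
\end{equation*}
and compare it with the direct integral expression $\int_X f(p)\la V_p^* \rho_p(b^*a) V_p h_1, h_2\ra \, \mathrm{d}\nu(p)$; this forces $\mu$ to equal the pushforward of $\nu$ under the measurable map $p \mapsto V_p^* \rho_p V_p$, verifying condition (2) of Definition \ref{def;oasa} and simultaneously showing $\mu_{\clB_\mu} = \mu$.

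For the reverse direction, fix an orthogonal abelian subalgebra $\clB \subseteq \acom$ and set $\mu_\clB$ as in Definition \ref{def;oasa}. That $\phi$ is the barycenter of $\mu_\clB$ is immediate from
\begin{equation*}
\la \phi(a)h_1, h_2\ra = \int_X \la V_p^* \rho_p(a) V_p h_1, h_2\ra \, \mathrm{d}\nu(p) = \int_{\ucpa} \la \phi^\prime(a) h_1, h_2\ra \, \mathrm{d}\mu_\clB(\phi^\prime),
\end{equation*}
using the change-of-variables formula for the pushforward. To check orthogonality via Proposition \ref{prop;om}, plug the same disintegration into the defining identity for $k_{\mu_\clB}$; this shows that $k_{\mu_\clB}(f)$ is exactly the diagonalizable operator in $\clB$ corresponding to the bounded measurable function $p \mapsto f(V_p^* \rho_p V_p)$ on $X$. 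Hence $k_{\mu_\clB}$ factors as $\text{L}^\infty(\ucpa,\mu_\clB) \to \text{L}^\infty(X,\nu) \to \clB$, each factor a $*$-isomorphism, so $k_{\mu_\clB}$ is multiplicative and by Proposition \ref{prop;om} the measure $\mu_\clB$ is orthogonal with $\clB_{\mu_\clB} = \clB$.

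The main obstacle I expect is the measure-theoretic bookkeeping linking the abstract disintegration space $(X,\nu)$ with the concrete space $(\ucpa,\mu)$ of UCP maps: one must show that $p \mapsto V_p^* \rho_p V_p$ is Borel measurable into $\ucpa$ in the BW topology, lands in $\ucpa$ outside a null set (requiring $V_p$ to be an isometry and $\rho_p$ to be unital almost everywhere), and that the resulting $\text{L}^\infty$-isomorphism agrees, through $k_\mu$, with the one furnished by the disintegration. This is precisely what condition (2) of Definition \ref{def;oasa} axiomatizes; the verification reduces, after unwinding the uniqueness of the minimal Stinespring representation fiberwise (Proposition \ref{prop;umsd}), to a standard measurable-selection argument that the two natural $*$-isomorphisms $\text{L}^\infty(\ucpa, \mu) \cong \clB_\mu \cong \text{L}^\infty(X,\nu)$ coincide almost everywhere on generating families of projections.
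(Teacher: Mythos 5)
Your proposal reconstructs the argument along exactly the lines the paper relies on: this theorem is quoted from \cite[Theorem 3.12]{BK3} rather than reproved here, and the machinery recalled in the preliminaries (the map $k_\mu$ of Lemma \ref{lem;kmu}, the multiplicativity criterion of Proposition \ref{prop;om} with Remark \ref{rem;avna}, and the disintegration/pushforward data built into Definition \ref{def;oasa}) is precisely what you use, in the same order, to pass from $\mu$ to $\clB_\mu$ and from $\clB$ to $\mu_\clB$ and to check the two assignments are mutually inverse. So the proposal is correct and essentially the same approach as the source, with the measure-theoretic identification of $(X,\nu)$ with $(\text{UCP}_A(H),\mu)$ that you flag being exactly the technical content condition (2) of Definition \ref{def;oasa} is designed to handle.
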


\section{A $G$-invariant and $G$-ergodic Decomposition of UCP Maps} \label{sec;giged}
Let $G$ be a group and $A$ be a separable, unital C*-algebra. Let $\tau : G \rightarrow \text{Aut}(A)$ be an action of the group $G$ on the C*-algebra $A$. Then for a fixed separable Hilbert space $H$, we consider the following set of all $G$-invariant UCP maps from $A$ with ranges in $B(H)$:
\begin{equation*}
\ucpag := \left \{ \phi \in \ucpa ~|~ \phi(a) = \phi(\tau_g(a)), \forall g \in G, a \in A \right \}.
\end{equation*}
Now we consider an example of a $G$-invariant UCP map. 
\begin{example} \label{ex;Ginv}
Let $G$ be a (compact) group and let $\lambda$ be the Haar measure on $G$. Consider $H$ as a separable Hilbert space, and $A$ as a C*-subalgebra of $B(H)$. Suppose $\tau : G \rightarrow \text{Aut}(A)$ is a continuous action of the group $G$ on $A$ given by 
\begin{equation*}
\tau_g(a) := U_g^* a U_g,
\end{equation*}
where $\{ U_g \}_{g \in G}$ is a family of unitaries in $A$. Consider a unital completely positive map $\phi_G : A \rightarrow B(H)$ given by 
\begin{equation*}
\phi_G(a) := \int_{G} U_g^*aU_g \, \mathrm{d} \lambda,
\end{equation*}
where the integral is observed in a weak sense. That is, for $h_1, h_2 \in H$ we have
\begin{equation*}
\la \phi_G(a)h_1, h_2 \ra =  \int_{G} \la U_g^*aU_gh_1, h_2 \ra \, \mathrm{d} \lambda.
\end{equation*}
For an arbitrary but fixed $g_0 \in G$, we have
\begin{eqnarray*}
\la \phi_G(\tau_{g_0}(a))h_1, h_2 \ra &=&  \int_{G} \la U_g^*\tau_{g_0}(a)U_gh_1, h_2 \ra \, \mathrm{d} \lambda \\
&=& \int_{G} \la U_g^*U_{g_0}^* a U_{g_0}U_gh_1, h_2 \ra \, \mathrm{d} \lambda \\
&=& \int_{G} \la U_{gg_0}^* a U_{g_0g}h_1, h_2 \ra \, \mathrm{d} \lambda \\
&=& \la \phi_G(a)h_1, h_2 \ra 
\end{eqnarray*} 
This shows that the map $\phi_G$ is a $G$-invariant UCP map.
\end{example}

Moreover, the set $\ucpag$ is a compact (in BW topology) and convex subset of the locally convex topological vector space of all completely bounded maps from $A$ to $B(H)$. Let $\text{ext}(\ucpag)$ denote the set of all extreme points of $\ucpag$.

\begin{definition} \label{def;e}
An extreme point of the set $\ucpag$, that is an element of the set $\text{ext}(\ucpag)$, is called as a $G$-ergodic UCP map. 
\end{definition}

Let $\phi \in \ucpag$, and the minimal Stinespring dilation of $\phi$ be given by $V^* \rho V$, where $V: H \rightarrow K$ be a bounded linear operator. Since $\phi \in \ucpag$, for $a\in A$ and $g \in G$, we have 
\begin{equation*}
\phi(a) = V^* \rho(a) V = \phi(\tau_g(a)) = V^* \rho(\tau_g(a)) V.
\end{equation*} 
This gives, for fixed $g \in G$ the minimal Stinespring dilation of $\phi$ can also be given by $V^* \rho \circ \tau_g V$. Now, by the uniqueness of the minimal Stinespring dilation (see Proposition \ref{prop;umsd}), we get a unitary operator, say $U_g : K \rightarrow K$ satisfying 
\begin{equation*}
U_gV = V  ~~~ \text{and} ~~~ U_g \rho U^*_g = \rho \circ \tau_g.
\end{equation*}
This gives a representation of a group $G$ on the Hilbert space $K$, denoted by $U_\phi : G \rightarrow B(K)$, and is defined as:
\begin{equation*}
U_\phi(g) := U_g.
\end{equation*}
Moreover, we have
\begin{equation*}
U_\phi(g)V = V  ~~~ \text{and} ~~~ U_\phi(g) \rho(a) U_\phi(g)^* = \rho \circ \tau_g(a), ~~~ \forall a \in A, ~ g \in G.
\end{equation*}

For a fixed $\phi \in \ucpag$, the following theorem characterizes the orthogonal measures on $\ucpa$ with barycenter $\phi$, for which the corresponding orthogonal abelian algebra reside in the commutant of $\rho(A)$ and $U_\phi(G)$. This characterization leads to a $G$-invariant decomposition of $\phi$. 

\begin{theorem} \label{thm;inv}
Let $\tau : G \rightarrow \text{Aut}(A)$ be an action of the group $G$ on the C*-algebra $A$. Let $\phi$ be an element of $\ucpag$. Then there is a one-to-one correspondence between the following:
\begin{enumerate}
\item the orthogonal measures $\mu \in \mathcal{O}_\phi(\ucpa)$ which satisfy 
\begin{equation*}
\int\limits_{\ucpa} f(\phi^\prime) \la \phi^\prime(\tau_{g^{-1}}(ab))h_1, h_2 \ra \, \mathrm{d} \mu = \int\limits_{\ucpa} f(\phi^\prime) \la \phi^\prime(ab)h_1, h_2 \ra \, \mathrm{d} \mu;
\end{equation*}
for all $a, b \in A$, $h_1, h_2 \in H$, $g \in G$, and $f \in \text{L}^\infty(\ucpa, \mu)$.
\item the orthogonal abelian subalgebras of $\{\rho(A) \cup U_\phi(G)\}^\prime$.
\end{enumerate}
\end{theorem}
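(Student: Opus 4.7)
My plan is to reduce everything to Theorem \ref{thm;omoas}, which already supplies a bijection between $\mathcal{O}_\phi(\ucpa)$ and the orthogonal abelian subalgebras of $\racom$ via $\mu \mapsto \clB_\mu := \{k_\mu(f) : f \in \text{L}^\infty(\ucpa, \mu)\}$. Since the only new content in the present statement is the $G$-invariance, it suffices to prove that, for $\mu \in \mathcal{O}_\phi(\ucpa)$, condition $(1)$ is equivalent to $\clB_\mu \subseteq U_\phi(G)^\prime$. Combined with Theorem \ref{thm;omoas} this will yield the claimed bijection onto the orthogonal abelian subalgebras of $\{\rho(A)\cup U_\phi(G)\}^\prime$.

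To establish this equivalence I would compute, for fixed $f \in \text{L}^\infty(\ucpa,\mu)$ and $g \in G$, the sesquilinear form of $U_\phi(g) k_\mu(f) U_\phi(g)^*$ on the total set $\{\rho(a)Vh : a \in A,\ h \in H\}$ in $K$. The intertwining relations $U_\phi(g)V = V$ and $U_\phi(g)\rho(a)U_\phi(g)^* = \rho(\tau_g(a))$ imply $U_\phi(g)^* \rho(a) V h = \rho(\tau_{g^{-1}}(a)) V h$, so Lemma \ref{lem;kmu} together with the $\ast$-automorphism identity $\tau_{g^{-1}}(b^*a) = \tau_{g^{-1}}(b)^* \tau_{g^{-1}}(a)$ yields
\begin{equation*}
\la U_\phi(g) k_\mu(f) U_\phi(g)^* \rho(a) V h_1,\, \rho(b) V h_2 \ra = \int\limits_{\ucpa} f(\phi^\prime) \la \phi^\prime(\tau_{g^{-1}}(b^*a))\, h_1, h_2 \ra \, \mathrm{d}\mu,
\end{equation*}
whereas the analogous form of $k_\mu(f)$ itself equals $\int_{\ucpa} f(\phi^\prime) \la \phi^\prime(b^*a) h_1, h_2 \ra \, \mathrm{d}\mu$ directly from Lemma \ref{lem;kmu}.

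By density of the Stinespring spanning set, the operator identity $U_\phi(g) k_\mu(f) U_\phi(g)^* = k_\mu(f)$, for all $f$ and $g$, is therefore equivalent to the coincidence of these two integrals for every $a,b \in A$, $h_1,h_2 \in H$, and $g \in G$. Because $A$ is unital, the sets $\{b^*a : a,b \in A\}$ and $\{ab : a,b \in A\}$ both equal $A$, so this rewrites verbatim as condition $(1)$, completing the reduction.

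The step I expect to require the most care is the algebraic rearrangement that pushes $U_\phi(g)^*$ across $\rho$ and $V$ while simultaneously distributing $\tau_{g^{-1}}$ correctly over the product $b^*a$. Once that bookkeeping is handled, the remaining inputs --- the $\sigma(\text{L}^\infty,\text{L}^1)$--WOT continuity of $k_\mu$ from Lemma \ref{lem;kmu} and the minimality density of $\{\rho(a)Vh\}$ in $K$ from Theorem \ref{thm;sd} --- make the passage from sesquilinear-form equality on a total set to the operator-level statement $\clB_\mu \subseteq U_\phi(G)^\prime$ routine, so no further obstacles are anticipated.
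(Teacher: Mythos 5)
Your proposal is correct and follows essentially the same route as the paper: both reduce to the bijection of Theorem \ref{thm;omoas} and then show, by evaluating matrix coefficients of $k_\mu(f)$ against the dense Stinespring spanning set and using the relations $U_\phi(g)V = V$, $U_\phi(g)\rho(\cdot)U_\phi(g)^* = \rho(\tau_g(\cdot))$, that the invariance condition $(1)$ holds if and only if $\clB_\mu$ commutes with $U_\phi(G)$, i.e.\ lies in $\{\rho(A)\cup U_\phi(G)\}^\prime$. The algebraic bookkeeping you flag (distributing $\tau_{g^{-1}}$ over $b^*a$ and passing from form equality on a total set to operator equality by minimality) is exactly what the paper's computation does, so no gap remains.
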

\begin{proof}
Let $\mu$ be an orthogonal measure on $\ucpa$ with the barycenter $\phi$. Then by Theorem \ref{thm;omoas}, we get the orthogonal abelian subalgebra corresponding to $\mu$, say $\mathcal{B}_\mu$ of $\racom$. By applying Lemma \ref{lem;kmu}, and Proposition \ref{prop;om}, we get a *-isomorphism $k_\mu : \text{L}^\infty(\ucpa, \mu) \rightarrow \racom$ satisfying 
\begin{equation*}
\la k_\mu(f) \rho(b) Vh_1, \rho(a) Vh_2 \ra = \int\limits_{\ucpa} f(\phi^\prime) \la \phi^\prime(a^*b)h_1, h_2 \ra \, \mathrm{d} \mu
\end{equation*}
for all $a, b \in A$, $h_1, h_2 \in H$, and $f \in \text{L}^\infty(\ucpa, \mu)$.
Moreover, from Remark \ref{rem;avna}, we have 
\begin{equation*}
\mathcal{B}_\mu = \{ k_\mu(f) ~|~ f \in \text{L}^\infty(\ucpa, \mu)  \}.
\end{equation*}

Note that the one-to-one correspondence between orthogonal measures with the barycenter $\phi$ and orthogonal abelian subalgebras of $\racom$ is already established in Theorem \ref{thm;omoas}. Therefore, to prove the theorem, it is sufficient to show that the orthogonal measure satisfies the invariance property given above if and only if the corresponding orthogonal abelian subalgebra is contained in $\{\rho(A) \cup U_\phi(G)\}^\prime$. 

First, we assume that the orthogonal measure $\mu$ satisfies 
\begin{equation*}
\int\limits_{\ucpa} f(\phi^\prime) \la \phi^\prime(\tau_{g^{-1}}(ab))h_1, h_2 \ra \, \mathrm{d} \mu = \int\limits_{\ucpa} f(\phi^\prime) \la \phi^\prime(ab)h_1, h_2 \ra \, \mathrm{d} \mu
\end{equation*}
for all $a, b \in A$, $h_1, h_2 \in H$, $g \in G$, and $f \in \text{L}^\infty(\ucpa, \mu)$. Then, we prove that the corresponding orthogonal abelian subalgebra $\mathcal{B}_\mu$ commutes with $U_\phi(g)$ for all $g \in G$. For this, consider
\begin{flalign*}
\la \rho(a) k_\mu(f) \rho(b) Vh_1, Vh_2 \ra &= \la k_\mu(f) \rho(ab) Vh_1, Vh_2 \ra \\
&= \int\limits_{\ucpa} f(\phi^\prime) \la \phi^\prime(ab)h_1, h_2 \ra \, \mathrm{d} \mu \\
&= \int\limits_{\ucpa} f(\phi^\prime) \la \phi^\prime(\tau_{g^{-1}}(ab))h_1, h_2 \ra \, \mathrm{d} \mu \\
&= \la k_\mu(f) \rho(\tau_{g^{-1}}(ab)) Vh_1, Vh_2 \ra \\
&= \la k_\mu(f) \rho(\tau_{g^{-1}}(a) \tau_{g^{-1}}(b)) Vh_1, Vh_2 \ra \\
&= \la \rho(\tau_{g^{-1}}(a)) k_\mu(f) \rho(\tau_{g^{-1}}(b)) Vh_1, Vh_2 \ra \\
&= \la U_\phi(g^{-1}) \rho(a)U_\phi(g) k_\mu(f) U_\phi(g^{-1}) \rho(b) U_\phi(g) Vh_1, Vh_2 \ra  \\
&= \la \rho(a)U_\phi(g) k_\mu(f) U_\phi(g^{-1}) \rho(b) Vh_1, Vh_2 \ra.
\end{flalign*}
By the minimality of the Hilbert space  $K$, we have
\begin{equation*}
k_\mu(f) = U_\phi(g) k_\mu(f) U_\phi(g^{-1})
\end{equation*}
Since this is true for all $g \in G$, we get $k_\mu(f)  \in U_\phi(G)^\prime$, that is $\mathcal{B}_\mu \subseteq U_\phi(G)^\prime$. Therefore, we have $\mathcal{B}_\mu \subseteq \{\rho(A) \cup U_\phi(G)\}^\prime$.

Conversely, suppose there exists an orthogonal abelian subalgebra, say $\mathcal{B}$ of $\{\rho(A) \cup U_\phi(G)\}^\prime$, and $\mu_{\mathcal{B}}$ be the orthogonal measure corresponding  to $\mathcal{B}$. Then we show that the measure $\mu_{\mathcal{B}}$ satisfies the following invariance condition:
\begin{equation*}
\int\limits_{\ucpa} f(\phi^\prime) \la \phi^\prime(\tau_{g^{-1}}(ab))h_1, h_2 \ra  \mathrm{d} \mu_{\mathcal{B}} = \int\limits_{\ucpa} f(\phi^\prime) \la \phi^\prime(ab)h_1, h_2 \ra \mathrm{d} \mu_{\mathcal{B}}.
\end{equation*}
For verifying this, consider
\begin{align*}
\la \rho(a) k_\mu(f) \rho(b) Vh_1, Vh_2 \ra &= \la k_\mu(f) \rho(ab) Vh_1, Vh_2 \ra \\
&= \int\limits_{\ucpa} f(\phi^\prime) \la \phi^\prime(ab)h_1, h_2 \ra \, \mathrm{d} \mu_{\mathcal{B}}.
\end{align*}
Also, we have
\begin{align*}
\la \rho(a) k_\mu(f) \rho(b) Vh_1, Vh_2 \ra
&= \la \rho(a) U_\phi(g) k_\mu(f) U_\phi(g^{-1}) \rho(b) Vh_1, Vh_2 \ra \\
&= \la U_\phi(g^{-1}) \rho(a) U_\phi(g) k_\mu(f) U_\phi(g^{-1}) \rho(b) U_\phi(g) Vh_1, Vh_2 \ra  \\
&= \la \rho(\tau_{g^{-1}}(a)) k_\mu(f) \rho(\tau_{g^{-1}}(b)) Vh_1, Vh_2 \ra \\
&= \la k_\mu(f) \rho(\tau_{g^{-1}}(ab)) Vh_1, Vh_2 \ra \\
&= \int\limits_{\ucpa} f(\phi^\prime) \la \phi^\prime(\tau_{g^{-1}}(ab))h_1, h_2 \ra \, \mathrm{d} \mu_{\mathcal{B}}
\end{align*}
Therefore, we have 
\begin{equation*}
\int\limits_{\ucpa} f(\phi^\prime) \la \phi^\prime(\tau_{g^{-1}}(ab))h_1, h_2 \ra \, \mathrm{d} \mu_{\mathcal{B}} = \int\limits_{\ucpa} f(\phi^\prime) \la \phi^\prime(ab)h_1, h_2 \ra \, \mathrm{d} \mu_{\mathcal{B}}.
\end{equation*}
This complete the proof.
\end{proof}

An orthogonal measure satisfying the invariance condition given in Theorem \ref{thm;inv} leads to a $G$-invariant decomposition of the UCP map. This has been proved in the following proposition.

\begin{proposition} \label{prop;supp}
Let $\tau : G \rightarrow \text{Aut}(A)$ be an action of a group $G$ on a C*-algebra $A$. Let $\phi$ be an element of $\ucpag$. Suppose $\mu \in \mathcal{O}_\phi(\ucpa)$. Then the measure $\mu$ satisfies 
\begin{equation*}
\int\limits_{\ucpa} f(\phi^\prime) \la \phi^\prime(\tau_{g^{-1}}(ab))h_1, h_2 \ra \, \mathrm{d} \mu = \int\limits_{\ucpa} f(\phi^\prime) \la \phi^\prime(ab)h_1, h_2 \ra \, \mathrm{d} \mu;
\end{equation*}
for all $a, b \in A$, $h_1, h_2 \in H$, $g \in G$, and $f \in \text{L}^\infty(\ucpa, \mu)$ if and only if the support of $\mu$ is contained in $\ucpag$. Moreover, if the measure $\mu$ is maximal in $M_\phi(\ucpa)$, then $\mu$ is supported on $\text{ext}(\ucpag)$.
\end{proposition}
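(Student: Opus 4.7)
The plan is to prove the stated biconditional by testing the integral identity against indicator functions, and then deduce the extremality claim from the Choquet--Bishop--de Leeuw theorem.

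For the direction $(\Leftarrow)$, if $\mathrm{supp}(\mu) \subseteq \ucpag$, then $\mu$-almost every $\phi^\prime$ is itself $G$-invariant, so the integrands $\la \phi^\prime(\tau_{g^{-1}}(ab))h_1, h_2 \ra$ and $\la \phi^\prime(ab)h_1, h_2 \ra$ agree pointwise $\mu$-a.e., giving the required equality of integrals directly.

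For the direction $(\Rightarrow)$, I would plug $f = \chi_E$ into the hypothesis for arbitrary Borel $E \subseteq \ucpa$. For each fixed quadruple $(a, b, h_1, h_2) \in A \times A \times H \times H$ and each $g \in G$, this equates two finite signed Borel measures on $\ucpa$ and forces
\begin{equation*}
\la \phi^\prime(\tau_{g^{-1}}(ab))h_1, h_2 \ra = \la \phi^\prime(ab)h_1, h_2 \ra \qquad \mu\text{-a.e.}
\end{equation*}
Exploiting separability of $A$ and of $H$ to pick countable dense subsets, and invoking a countable determining set in $G$ (either by taking $G$ countable or by using a countable dense subgroup together with continuity of $\tau$), a countable union of $\mu$-null sets produces a single conull subset $N \subseteq \ucpa$ on which every $\phi^\prime$ is $G$-invariant. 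Since $\ucpag$ is BW-closed, this yields $\mathrm{supp}(\mu) \subseteq \overline{N} \subseteq \ucpag$.

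For the final clause, assume $\mu$ is maximal in $M_\phi(\ucpa)$. The Choquet--Bishop--de Leeuw theorem, applied to the metrizable compact convex set $\ucpa$, says that such a maximal measure with prescribed barycenter is supported on $\mathrm{ext}(\ucpa)$. Combining this with the inclusion $\mathrm{supp}(\mu) \subseteq \ucpag$ established above gives
\begin{equation*}
\mathrm{supp}(\mu) \subseteq \mathrm{ext}(\ucpa) \cap \ucpag,
\end{equation*}
and the observation that any extreme point of $\ucpa$ which happens to lie in the convex subset $\ucpag$ is automatically an extreme point of $\ucpag$ concludes that $\mathrm{supp}(\mu) \subseteq \mathrm{ext}(\ucpag)$.

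The principal obstacle is the quantifier interchange in the converse direction: one must upgrade ``for each fixed parameter tuple, $\mu$-a.e.\ $\phi^\prime$ is invariant'' to ``$\mu$-a.e.\ $\phi^\prime$ is invariant for every parameter tuple.'' Separability of $A$ and $H$ handles the module parameters routinely, while handling $G$ requires the implicit hypothesis that only countably many group elements need to be checked; once these countable reductions are in place, the rest is a single null-set exclusion plus a direct invocation of Choquet theory and the trivial extremality-restriction remark.
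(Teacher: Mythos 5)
Your overall strategy coincides with the paper's: the easy direction by pointwise a.e.\ agreement of the integrands, the converse by turning the integral identity into $\mu$-a.e.\ pointwise identities via indicator functions and a countable-density reduction, and the maximality clause by Choquet theory on the metrizable compact convex set $\ucpa$ together with the (correct, and in the paper implicit) observation that $\mathrm{ext}(\ucpa)\cap\ucpag\subseteq\mathrm{ext}(\ucpag)$. The one substantive problem is your claim that the converse direction ``requires the implicit hypothesis that only countably many group elements need to be checked'' (i.e.\ $G$ countable, or a countable dense subgroup plus a continuity assumption on $\tau$). The proposition is stated for an arbitrary group, and no such hypothesis is needed, because the conclusion concerns the closed support of $\mu$ rather than a single conull set of simultaneously invariant maps. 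Fix $g\in G$. Your countable-density argument in $A$ and $H$ (legitimate, since each $\phi'$ is contractive and $\tau_{g^{-1}}$ is isometric, and since taking $b=1_A$ lets $ab$ run over all of $A$) shows that the set $F_g:=\{\phi'\in\ucpa : \phi'\circ\tau_{g^{-1}}=\phi'\}$ has full $\mu$-measure. Each $F_g$ is BW-closed, being an intersection of sets cut out by the continuous conditions $\la\phi'(\tau_{g^{-1}}(a))h_1,h_2\ra=\la\phi'(a)h_1,h_2\ra$. Hence $\mathrm{supp}(\mu)\subseteq F_g$ for every single $g$, and therefore $\mathrm{supp}(\mu)\subseteq\bigcap_{g\in G}F_g=\ucpag$; intersecting closed sets that each contain the support needs no countability in $g$, so no measure-theoretic union of null sets over $G$ ever has to be formed. (Once this is known, ``$\mu$-a.e.\ $\phi'$ is $G$-invariant'' also follows, since $\ucpag$ is closed and conull.)

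With that repair your argument proves exactly the stated proposition and is, in substance, the paper's proof; indeed the paper is terser than you are, asserting the a.e.\ pointwise equality and the passage to the support without spelling out the parameter bookkeeping, and invoking separability of $A$ and $H$ only when citing that maximal measures in $M_\phi(\ucpa)$ are supported on the extreme points. Your explicit handling of the quantifier interchange over $A$ and $H$, and your explicit statement of the inclusion $\mathrm{ext}(\ucpa)\cap\ucpag\subseteq\mathrm{ext}(\ucpag)$, are the right things to add; just delete the spurious countability caveat on $G$ and replace it with the per-$g$ closed-set argument above.
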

\begin{proof}
Let $\phi \in \ucpag$, and $\mu$ be an element of $\mathcal{O}_\phi(\ucpa)$. Suppose the support of the measure $\mu$ is contained in $\ucpag$, then for $g \in G$, and $a, b \in A$ we have 
\begin{equation*}
\phi^\prime(\tau_{g^{-1}}(ab)) = \phi^\prime(ab) ~~ \text{for almost every} ~~ \phi^\prime.
\end{equation*}
Therefore, for all $a, b \in A$, $h_1, h_2 \in H$, $g \in G$ and $f \in \text{L}^\infty(\ucpa, \mu)$, we get
\begin{equation*}
\int\limits_{\ucpa} f(\phi^\prime) \la \phi^\prime(\tau_{g^{-1}}(ab))h_1, h_2 \ra \, \mathrm{d} \mu = \int\limits_{\ucpa} f(\phi^\prime) \la \phi^\prime(ab)h_1, h_2 \ra \, \mathrm{d} \mu.
\end{equation*}

Conversely, suppose for an arbitrary but fixed $a, b \in A$, $h_1, h_2 \in H$, and $g \in G$ we have,
\begin{equation*}
\int\limits_{\ucpa} f(\phi^\prime) \la \phi^\prime(\tau_{g^{-1}}(ab))h_1, h_2 \ra \, \mathrm{d} \mu = \int\limits_{\ucpa} f(\phi^\prime) \la \phi^\prime(ab)h_1, h_2 \ra \, \mathrm{d} \mu
\end{equation*}
is true for all $f \in \rm{L}^\infty(\ucpa, \mu)$. Then, we necessarily have 
\begin{equation*}
\la \phi^\prime(\tau_{g^{-1}}(ab))h_1, h_2 \ra  = \la \phi^\prime(ab)h_1, h_2 \ra ~~ \text{for almost every} ~~ \phi^\prime.
\end{equation*}
This implies $\phi^\prime \in \ucpag$ for almost every $\phi^\prime$. Therefore, the support of $\mu$ is contained in $\ucpag$.

The last part of the proposition follows because every maximal measure is supported on the set of extreme points (see \cite[Theorem 4.1.7]{BR1}). If the measure $\mu$ satisfies equivalent conditions given in the proposition, and if $\mu$ is maximal in $M_\phi(\ucpa)$, then $\mu$ is supported (as the C*-algebra $A$ and the Hilbert space $H$ are separable) on $\text{ext}(\ucpag)$. 
\end{proof}

\section{$G$-ergodic UCP Maps}\label{sec;geucpm}
In the previous section, we analysed a $G$-invariant and a $G$-ergodic decomposition of a $G$-invariant UCP map. In the case of a $G$-ergodic decomposition, the measure is supported on the set of extreme points of $\ucpag$. Therefore, the theory of a $G$-ergodic decomposition is incomplete unless we characterize the set $\text{ext}(\ucpag)$. The characterization of extreme points of the set $\ucpag$ is given in Theorem \ref{thm;egiucp}. But before stating the theorem, we prove the following lemma that gives a Radon--Nikodym type result in the space of $G$-invariant UCP maps (in Stinespring dilation). We use the following notations to state the lemma.

\begin{align*}
\cpa &:= \{ \phi : A \rightarrow B(H) ~|~ \phi ~~ \text{is completely positive} \} \\
\cpag &:= \left \{ \phi \in \cpa ~|~ \phi(a) = \phi(\tau_g(a)), \forall g \in G, a \in A \right \}
\end{align*}

\begin{lemma}\label{lem;rnd}
Let $\phi \in \ucpag$, and $V^* \rho V$ be the minimal Stinespring dilation of $\phi$. Then, the map $T \mapsto \phi_T$ is an affine isomorphism of the partially ordered convex set of operators $\{ T \in  (\rho(A) \cup U_\phi (G) )^\prime ~|~ 0 \leq T \leq 1_K\}$ onto $[0, \phi] \cap \cpag$.
\end{lemma}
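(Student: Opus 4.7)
The plan is to bootstrap directly from Arveson's Radon--Nikodym theorem (Theorem \ref{thm;arnd}), which already supplies an affine order isomorphism $T \mapsto \phi_T = V^*\rho(\cdot)TV$ from $\{T \in \rho(A)' : 0 \leq T \leq 1_K\}$ onto $[0,\phi]$. The additional content of this lemma is to pin down which $T \in \rho(A)'$ produce a $G$-invariant $\phi_T$, and my claim is that these are exactly the $T$ which also commute with $U_\phi(G)$. Throughout I would use the two defining relations of $U_\phi$, namely $U_\phi(g)V = V$ and $U_\phi(g)\rho(a)U_\phi(g)^* = \rho(\tau_g(a))$; the first, together with unitarity of $U_\phi(g)$, also yields $U_\phi(g)^*V = V$ and $V^*U_\phi(g) = V^*$.

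For the forward direction, assuming $T$ commutes with both $\rho(A)$ and $U_\phi(G)$, a direct substitution gives
\begin{align*}
\phi_T(\tau_g(a)) &= V^*U_\phi(g)\rho(a)U_\phi(g)^*TV = V^*\rho(a)U_\phi(g)^*TV \\
&= V^*\rho(a)TU_\phi(g)^*V = V^*\rho(a)TV = \phi_T(a),
\end{align*}
so $\phi_T \in \cpag$, and the bounds $0 \leq \phi_T \leq \phi$ are immediate from Arveson's theorem.

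The converse is the substantive part. Assuming $T \in \rho(A)'$ with $0 \leq T \leq 1_K$ and $\phi_T(\tau_g(a)) = \phi_T(a)$ for every $a \in A$ and $g \in G$, the same identities reduce the hypothesis to $V^*\rho(a)(U_\phi(g)^*T - T)V = 0$ for all $a \in A$. Passing to inner products and invoking the minimality $\overline{\mathrm{span}}\{\rho(a^*)Vh : a \in A,\, h \in H\} = K$, I first extract $U_\phi(g)^*TVh = TVh$ for every $h \in H$, i.e., $U_\phi(g)^*T$ and $T$ agree on the subspace $VH$. To extend this from $VH$ to all of $K$, I would evaluate both $U_\phi(g)^*T$ and $TU_\phi(g)^*$ on a typical spanning vector $\rho(b)Vh$: using $T \in \rho(A)'$ together with the covariance relation $U_\phi(g)^*\rho(b) = \rho(\tau_{g^{-1}}(b))U_\phi(g)^*$, both expressions collapse to $\rho(\tau_{g^{-1}}(b))TVh$, and minimality once more yields $TU_\phi(g)^* = U_\phi(g)^*T$ on all of $K$. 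The main obstacle is precisely this passage from $VH$ to the full space $K$: it requires simultaneous use of minimality, the commutation $T \in \rho(A)'$, and the covariance relation, whereas the affineness and order-preservation of $T \mapsto \phi_T$ are inherited for free from Arveson's theorem.
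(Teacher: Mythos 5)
Your proposal is correct and follows essentially the same route as the paper: both bootstrap from Arveson's Radon--Nikodym theorem and reduce the lemma to showing that $\phi_T\in\cpag$ if and only if $T\in U_\phi(G)^\prime$, using the relations $U_\phi(g)V=V$, $U_\phi(g)\rho(\cdot)U_\phi(g)^*=\rho(\tau_g(\cdot))$, the commutation $T\in\rho(A)^\prime$, and the minimality of the Stinespring space. The only difference is organizational: the paper obtains $T=U_\phi(g)^*TU_\phi(g)$ in one chain of inner-product identities against the dense span $\{\rho(a)Vh\}$, while you first show agreement on $VH$ and then extend by covariance, which is the same computation split into two steps.
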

\begin{proof}
By using Theorem \ref{thm;arnd}, we know that the map $T \mapsto \phi_T$ is an
affine isomorphism of the partially ordered convex set of operators $\{ T \in  \rho(A)^\prime ~|~ 0 \leq T \leq 1_K\}$ onto $[0, \phi]$. Therefore, to prove the lemma it is sufficient to show $\phi_T \in [0, \phi] \cap \cpag$ if and only if $T \in  (\rho(A) \cup U_\phi (G) )^\prime$ with $0 \leq T \leq 1_K$.

First, assume $\phi_T \in [0, \phi] \cap \cpag$, then $\phi_T \leq \phi$ and $\phi_T(a) = \phi_T(\tau_g(a))$ for all $a \in A$ and $g \in G$. Since $\phi_T \leq \phi$, we have from Theorem \ref{thm;arnd}, that $T \in \rho(A)^\prime$ and $0 \leq T \leq 1_K$. Then using the assumption $\phi_T \in \cpag$, we show that $T \in U_\phi(G)^\prime$.  For this, consider for $a, b \in A$, $h_1, h_2 \in H$ and $g \in G$ we have,
\begin{align*}
\la T \rho(a) Vh_1, \rho(b) Vh_2 \ra &= \la V^* T \rho(b^*a) Vh_1, h_2 \ra  \\
&= \la V^* T U_\phi(g) \rho(b^*a) U_\phi(g)^* Vh_1, h_2 \ra \\
&= \la \rho(a) U_\phi(g)^* Vh_1, \rho(b) U_\phi(g)^* T Vh_2 \ra \\
&= \la \rho(a) U_\phi(g)^* Vh_1, U_\phi(g)^* \rho(\tau_g(b)) T Vh_2 \ra \\
&= \la \rho(a) U_\phi(g)^* Vh_1, U_\phi(g)^* T \rho(\tau_g(b)) Vh_2 \ra \\
&= \la T U_\phi(g) \rho(a) U_\phi(g)^* Vh_1, \rho(\tau_g(b)) Vh_2 \ra \\
&= \la T U_\phi(g) \rho(a) U_\phi(g)^* Vh_1, U_\phi(g) \rho(b) U_\phi(g)^* Vh_2  \ra \\
&= \la U_\phi(g)^* T U_\phi(g) \rho(a) U_\phi(g)^* Vh_1, \rho(b) Vh_2 \ra \\
&= \la U_\phi(g)^* T U_\phi(g) \rho(a) Vh_1, \rho(b) Vh_2 \ra
\end{align*}
By the minimality of the Hilbert space $K$, we get $T = U_\phi(g)^* T U_\phi(g)$. Since $g \in G$ was arbitrary, we get $T \in U_\phi(G)^\prime$. This proves, if $\phi_T \in [0, \phi] \cap \cpag$, then $T \in  (\rho(A) \cup U_\phi (G) )^\prime$ with $0 \leq T \leq 1_K$.

Next, assume $T \in  (\rho(A) \cup U_\phi (G) )^\prime$ and $0 \leq T \leq 1_K$. Then we show that $\phi_T \in [0, \phi] \cap \cpag$. Since $T \in \racom$ and $0 \leq T \leq 1_K$, we have $\phi_T \in [0, \phi]$. Using the assumption $T \in U_\phi (G)^\prime$, we show that $\phi_T \in \cpag$. For this, consider an arbitrary but fixed $a \in A$ and $g \in G$, then we have
\begin{align*}
\la \phi_T(a) h_1, h_2 \ra &= \la V^* T \rho(a) V h_1, h_2 \ra \\
&= \la T \rho(a) Vh_1, Vh_2 \ra \\
&= \la T \rho(a) U_\phi(g) Vh_1, U_\phi(g)^* Vh_2 \ra \\
&= \la U_\phi(g) T \rho(a) U_\phi(g)^* Vh_1, h_2 \ra \\
&= \la T U_\phi(g) \rho(a) U_\phi(g)^* Vh_1, h_2 \ra \\
&= \la V^* T \rho(\tau_g(a)) Vh_1, h_2 \ra \\
&= \la \phi_T(\tau_g(a)) h_1, h_2 \ra.
\end{align*}
This shows that $\phi_T \in \cpag$. Therefore, if $T \in  (\rho(A) \cup U_\phi (G) )^\prime$ with $0 \leq T \leq 1_K$, then $\phi_T \in [0, \phi] \cap \cpag$. 
\end{proof}

Now we prove a similar result in the case of Paschke dilation of completely positive maps. First, we set some preliminary notations to state the result. Consider $\phi \in \ucpag$, then by applying Theorem \ref{thm;pd} we get the triple $(X, \sigma, e)$, where $X$ is a Hilbert-$B(H)$ module, $e \in X$, and $\sigma : A \rightarrow \mathcal{P}(X)$ be a *-representation, such that 
\begin{equation*}
\phi(a) = \langle \sigma(a)e, e \rangle ~~ \forall ~~ a \in A.
\end{equation*}
Since $\phi \in \ucpag$, for $a \in A$ and $g \in G$, we have 
\begin{equation*}
\phi(a) = \langle \sigma(a)e, e \rangle = \langle \sigma(\tau_g(a))e, e \rangle = \phi(\tau_g(a)).
\end{equation*}
This gives, for fixed $g \in G$ the Paschke dilation of $\phi$ can also be given by the triple $(X, \sigma \circ \tau_g, e)$.  Now, by the uniqueness given in Theorem \ref{thm;umpd}, we get a Hilbert module isomorphism, say $W_g : X \rightarrow X$ satisfying $W_ge = e$ and $W_g \sigma W^*_g = \sigma \circ \tau_g$. Since this is true for all $g \in G$, we get a representation of the group $G$ on the Hilbert module $X$ denoted by $W_\phi : G \rightarrow \mathcal{P}(X)$, and which is defined as:
\begin{equation*}
W_\phi(g) := W_g.
\end{equation*}
Moreover, this satisfies
\begin{equation*}
W_\phi(g)e = e \hspace{10pt} \text{and} \hspace{10pt} W_\phi(g) \sigma(a) W_\phi(g)^* = \sigma \circ \tau_g(a), ~~~ \forall a \in A, ~ g \in G.
\end{equation*}  

Consider the set 
\begin{equation*}
X^\prime := \{ \tau: X \rightarrow B(H) ~|~ \tau ~ \text{is a bounded $B$-module map} \}.
\end{equation*} 
Then Theorem \ref{thm;xprime} extends the $B(H)$-valued inner product $\langle \cdot, \cdot \rangle$ to $X^\prime \times X^\prime$ in such a way as to make $X^\prime$ into a self-dual Hilbert $B(H)$-module. Further, using Proposition \ref{prop;pext} we define a representation of the group $G$ on $X^\prime$ as $\tilde{W_\phi}: G \rightarrow \mathcal{P}(X^\prime)$, where $\tilde{W_\phi}(g)$ is defined by $\widetilde{W_\phi(g)}$. Also, corresponding to $\sigma$, we have a *-representation $\tilde{\sigma}: A \rightarrow \mathcal{P}(X^\prime)$ defined by $\tilde{\sigma}(a) := \widetilde{\sigma(a)}$. Moreover, we get
\begin{equation*}
\tilde{W_\phi}(g) \tilde{\sigma}(a) \tilde{W_\phi}(g)^* = \widetilde{\sigma}\circ \tau_g(a) \hspace{10pt} \forall a \in A, ~ g \in G.
\end{equation*}
and 
\begin{equation*}
\tilde{W_\phi}(g)\hat{e} = \hat{e} \hspace{10pt} \forall ~ g \in G.
\end{equation*}
From Section \ref{sec;p}, recall that for $T \in \mathcal{P}(X^\prime)$, one can define a linear map $\phi_T : A \rightarrow B(H)$ by $\phi_T(a) := \langle T \tilde{\sigma}(a) \hat{e}, \hat{e} \rangle$ for $a \in A$. We are now ready to state and prove the following Radon--Nikodym type result for $\phi \in \ucpag$ in the setting of Paschke dilation of completely positive maps.

\begin{lemma}\label{lem;prnd}
Let $\phi \in \ucpag$ and $(X, \sigma, e)$ be the triple in the Paschke dilation of $\phi$. Then, the map $T \mapsto \phi_T$ is an affine order isomorphism of $\{ T \in  (\tilde{\sigma}(A) \cup \tilde{W_\phi}(G))^\prime ~|~ 0 \leq T \leq 1_{X^\prime} \}$ onto $[0, \phi] \cap \cpag$.
\end{lemma}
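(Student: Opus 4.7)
The strategy parallels Lemma~\ref{lem;rnd}, transplanted to the Hilbert module setting. Paschke's Radon--Nikodym theorem (Theorem~\ref{thm;prnd}) already provides an affine order isomorphism $T \mapsto \phi_T$ from $\{T \in \tilde{\sigma}(A)^\prime : 0 \leq T \leq 1_{X^\prime}\}$ onto $[0,\phi]$. It therefore suffices to show that, among such $T$, the map $\phi_T$ is $G$-invariant if and only if $T$ additionally commutes with every element of $\tilde{W_\phi}(G)$.

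For the reverse direction, I would assume $T \in (\tilde{\sigma}(A) \cup \tilde{W_\phi}(G))^\prime$ with $0 \leq T \leq 1_{X^\prime}$ and compute $\phi_T(\tau_g(a))$ directly, using $\tilde{\sigma}(\tau_g(a)) = \tilde{W_\phi}(g)\tilde{\sigma}(a)\tilde{W_\phi}(g)^*$, the identities $\tilde{W_\phi}(g)\hat{e} = \hat{e} = \tilde{W_\phi}(g)^*\hat{e}$, and the commutation $[T, \tilde{W_\phi}(g)] = 0$. These ingredients collapse $\phi_T(\tau_g(a))$ to $\phi_T(a)$, so $\phi_T \in \cpag$.

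For the forward direction, fix $g \in G$ and assume $\phi_T \in [0,\phi] \cap \cpag$; the task is to show $\tilde{W_\phi}(g)^* T \tilde{W_\phi}(g) = T$. Since $T \in \tilde{\sigma}(A)^\prime$ and adjointable operators commute with the right $B(H)$-action, for any $a, c \in A$ and $b, d \in B(H)$ one finds
\[
\langle T \tilde{\sigma}(a)(\hat{e} b), \tilde{\sigma}(c)(\hat{e} d) \rangle = d^* \phi_T(c^*a) b.
\]
Invoking $G$-invariance of $\phi_T$ and the fact that $\tau_g$ is a $*$-automorphism, the right-hand side equals $d^*\phi_T(\tau_g(c)^*\tau_g(a)) b$. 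Running the same computation in reverse with $\tau_g(a), \tau_g(c)$ in place of $a, c$, and then applying $\tilde{\sigma} \circ \tau_g = \tilde{W_\phi}(g)\tilde{\sigma}(\cdot)\tilde{W_\phi}(g)^*$ together with $\tilde{W_\phi}(g)^*\hat{e} = \hat{e}$, shows this equals $\langle \tilde{W_\phi}(g)^* T \tilde{W_\phi}(g)\,\tilde{\sigma}(a)(\hat{e} b), \tilde{\sigma}(c)(\hat{e} d) \rangle$. So $T$ and $\tilde{W_\phi}(g)^* T \tilde{W_\phi}(g)$ agree in all matrix coefficients on the minimal submodule.

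The main obstacle is upgrading this pointwise equality to equality as adjointable operators on the self-dual completion $X^\prime$. The argument will proceed in two steps. First, by the minimality clause of Theorem~\ref{thm;pd}, the set $\{\tilde{\sigma}(a)(\hat{e} b) : a \in A, b \in B(H)\}$ is norm-dense in $\hat{X} \subset X^\prime$, and continuity of the $B(H)$-valued inner product extends the equality above to all pairs in $\hat{X}$. Second, by the description of $X^\prime$ in Theorem~\ref{thm;xprime}, any $z \in X^\prime$ with $\langle \hat{y}, z\rangle = 0$ for every $y \in X$ is the zero module map, hence $z = 0$; applying this to $(T - \tilde{W_\phi}(g)^* T \tilde{W_\phi}(g))x$ for $x \in \hat{X}$ shows the operator annihilates $\hat{X}$, and a further application using adjointability propagates the vanishing to all of $X^\prime$. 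Since $g \in G$ was arbitrary, this gives $T \in \tilde{W_\phi}(G)^\prime$, completing the proof.
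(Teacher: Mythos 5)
Your proposal is correct and takes essentially the same route as the paper's proof: reduce via Paschke's Radon--Nikodym theorem (Theorem \ref{thm;prnd}) to showing that $\phi_T$ is $G$-invariant precisely when $T$ commutes with $\tilde{W_\phi}(G)$, proving the converse by the intertwining relations and the fixed vector $\hat{e}$, and the forward direction by the same matrix-coefficient computation on the dense submodule. Your explicit two-step extension of the coefficient identity from the span of $\{\tilde{\sigma}(a)(\hat{e}b)\}$ to equality in $\mathcal{P}(X^\prime)$ (via self-duality and adjointability) simply fills in what the paper compresses into the phrase ``by the minimality of the Hilbert module $X$.''
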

\begin{proof}
From Theorem \ref{thm;prnd}, we know that the map $T \mapsto \phi_T$ is an affine isomorphism of $\{ T \in \tilde{\sigma}(A)^\prime ~|~ 0\leq T \leq 1_{X^\prime} \}$ onto $[0, \phi]$. Therefore, to prove the lemma it is enough to show $\phi_T \in [0, \phi] \cap \cpag$ if and only if $T \in  (\tilde{\sigma}(A) \cup \tilde{W_\phi}(G))^\prime$ with $0 \leq T \leq 1_{X^\prime}$. 

Let $\phi_T \in [0, \phi] \cap \cpag$. Then $\phi_T \leq \phi$ and $\phi_T(a) = \phi_T(\tau_g(a))$ for all $a \in A$ and $g \in G$. Since $\phi_T \leq \phi$, from Theorem \ref{thm;prnd}, we get $T \in \tilde{\sigma}(A)^\prime$ and $0 \leq T \leq 1_{X^\prime}$. Now we use the assumption $\phi_T \in \cpag$ to show that $T \in \tilde{W_\phi}(G)^\prime$. For $a, b \in A$ and $g \in G$ we have, 
\begin{align*}
\la T \tilde{\sigma}(a) \hat{e}, \tilde{\sigma}(b) \hat{e} \ra &= \la T \tilde{\sigma}(b^*a) \hat{e}, \hat{e} \ra  \\
&= \la T \tilde{W_\phi}(g) \tilde{\sigma}(b^*a) \tilde{W_\phi}(g)^* \hat{e}, \hat{e} \ra \\
&= \la \tilde{\sigma}(a) \tilde{W_\phi}(g)^* \hat{e}, \tilde{\sigma}(b) \tilde{W_\phi}(g)^* T \hat{e} \ra \\
&= \la \tilde{\sigma}(a) \tilde{W_\phi}(g)^*\hat{e}, \tilde{W_\phi}(g)^* \tilde{\sigma}(\tau_g(b)) T \hat{e} \ra \\
&= \la \tilde{\sigma}(a) \tilde{W_\phi}(g)^*\hat{e}, \tilde{W_\phi}(g)^* T \tilde{\sigma}(\tau_g(b))\hat{e} \ra \\
&= \la T \tilde{W_\phi}(g) \tilde{\sigma}(a) \tilde{W_\phi}(g)^* \hat{e}, \tilde{\sigma}(\tau_g(b)) \hat{e} \ra \\
&= \la T \tilde{W_\phi}(g) \tilde{\sigma}(a) \tilde{W_\phi}(g)^* \hat{e}, \tilde{W_\phi}(g) \tilde{\sigma}(b) \tilde{W_\phi}(g)^* \hat{e}  \ra \\
&= \la \tilde{W_\phi}(g)^* T \tilde{W_\phi}(g) \tilde{\sigma}(a) \tilde{W_\phi}(g)^* \hat{e}, \tilde{\sigma}(b) \hat{e} \ra \\
&= \la \tilde{W_\phi}(g)^* T \tilde{W_\phi}(g) \tilde{\sigma}(a) \hat{e}, \tilde{\sigma}(b) \hat{e} \ra.
\end{align*}
By the minimality of the Hilbert module $X$, we get 
\begin{equation*}
T = \tilde{W_\phi}(g)^* T \tilde{W_\phi}(g),
\end{equation*} 
that is $T \in \tilde{W_\phi}(G)^\prime$. Therefore, if $\phi_T \in [0, \phi] \cap \cpag$, then $T \in  (\tilde{\sigma}(A) \cup \tilde{W_\phi}(G))^\prime$ with $0 \leq T \leq 1_{X^\prime}$. 

For the converse, assume $T \in  (\tilde{\sigma}(A) \cup \tilde{W_\phi}(G))^\prime$ and $0 \leq T \leq 1_{X^\prime}$, then we show that $\phi_T \in [0, \phi] \cap \cpag$. Since $T \in \tilde{\sigma}(A) ^\prime$, and $0 \leq T \leq 1_{X^\prime}$, we have $\phi_T \in [0, \phi]$. Now we use the assumption $T \in \tilde{W_\phi} (G)^\prime$ to show that $\phi_T \in \cpag$. For this, consider an arbitrary but fixed $a \in A$ and $g \in G$, then we have
\begin{align*}
\phi_T(a) &= \la T \tilde{\sigma}(a) \hat{e}, \hat{e} \ra \\
&= \la T \tilde{\sigma}(a) \tilde{W_\phi}(g) \hat{e}, \tilde{W_\phi}(g)^* \hat{e} \ra \\
&= \la \tilde{W_\phi}(g) T \tilde{\sigma}(a) \tilde{W_\phi}(g)^* \hat{e}, \hat{e} \ra \\
&= \la T \tilde{W_\phi}(g) \tilde{\sigma}(a) \tilde{W_\phi}(g)^* \hat{e}, \hat{e} \ra \\
&= \la T \tilde{\sigma}(\tau_g(a)) \hat{e}, \hat{e} \ra \\
&= \phi_T(\tau_g(a))
\end{align*}
This shows that $\phi_T \in \cpag$. Therefore, if $T \in  (\tilde{\sigma}(A) \cup \tilde{W_\phi}(G))^\prime$ with $0 \leq T \leq 1_{X^\prime}$, then $\phi_T \in [0, \phi] \cap \cpag$.  
\end{proof}

Now we are ready to prove the main theorem of this section which characterizes extreme points of the set $\ucpag$. But before that, we recall a definition due to Arveson in \cite{Arveson1}.

\begin{definition}
Let $H$ be a Hilbert space and $H_0$ be a closed subspace of $H$. Let $P$ denote the projection of $H$ onto $H_0$, then, we say that $H_0$ is a faithful subspace of $H$ for a von Neumann algebra $M$, if, $PT_{|H_0} = 0$ for $T \in M$, then $T = 0$. 
\end{definition}

\begin{theorem}\label{thm;egiucp}
Let $\tau : G \rightarrow \text{Aut}(A)$, and $\phi \in \ucpag$. Let $V^* \rho V$ be the minimal Stinespring dilation, and $(X, \sigma, e)$ be the Paschke dilation of $\phi$. Then the following statements are equivalent:
\begin{enumerate}
\item the UCP map $\phi \in \text{ext}(\ucpag)$;
\item if $\phi_0 \in [0, \phi] \cap t\ucpag$, then $\phi_0 = t\phi$, where $0 < t < 1$;
\item the subspace $[VH]$ is a faithful subspace of the Hilbert space $K$ for the von Neumann algebra $\{ \rho(A) \cup U_\phi (G) \}^\prime$; 
\item if any $T \in (\tilde{\sigma}(A) \cup \tilde{W_\phi}(G))^\prime$ satisfies $\langle T \hat{e}, \hat{e} \rangle = 0$, then $T=0$. 
\end{enumerate}
\end{theorem}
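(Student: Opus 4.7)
The plan is to prove (1)$\Leftrightarrow$(2) by a standard convex-analytic argument, then reduce each of (3) and (4) to (2) by invoking the Radon--Nikodym descriptions of Lemmas \ref{lem;rnd} and \ref{lem;prnd}. For (1)$\Rightarrow$(2), suppose $\phi_0 \in [0,\phi]\cap t\ucpag$ with $0<t<1$. Writing $\phi_0 = t\psi_0$ for some $\psi_0 \in \ucpag$ and setting $\psi_1 := (\phi-\phi_0)/(1-t)$, one checks that $\psi_1$ is completely positive (since $\phi_0 \leq \phi$), unital (since $\phi_0(1) = t\cdot 1_H$), and $G$-invariant, so $\phi = t\psi_0 + (1-t)\psi_1$ is a convex decomposition in $\ucpag$; extremality of $\phi$ then forces $\psi_0 = \psi_1 = \phi$, giving $\phi_0 = t\phi$. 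The converse is immediate, since any convex decomposition $\phi = t\psi_0 + (1-t)\psi_1$ in $\ucpag$ produces $t\psi_0 \in [0,\phi]\cap t\ucpag$.

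For (2)$\Leftrightarrow$(3), I apply Lemma \ref{lem;rnd} to represent every element of $[0,\phi]\cap \cpag$ uniquely as $\phi_T$ for some $T \in \{\rho(A)\cup U_\phi(G)\}^\prime$ with $0 \leq T \leq 1_K$. Since $V$ is an isometry, the condition $\phi_T \in t\ucpag$ is exactly $V^*(T - t\cdot 1_K)V = 0$, while the conclusion $\phi_T = t\phi$ is, by the injectivity in Lemma \ref{lem;rnd}, equivalent to $T = t\cdot 1_K$. Setting $S := T - t\cdot 1_K$, statement (2) thus reads: every such $S$ with $V^*SV = 0$ vanishes. The positivity range on $T$ can be removed by the usual shift-and-scale device (replace a self-adjoint $S$ by $(S/\|S\|+1_K)/2$ and invoke the statement at $t=\tfrac12$), so (2) is equivalent to the unconstrained statement: every $S \in \{\rho(A)\cup U_\phi(G)\}^\prime$ with $V^*SV = 0$ is zero. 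Finally, $V^*SV = 0$ says $\la SVh, Vh'\ra = 0$ for all $h,h' \in H$, i.e., $S$ maps $[VH]$ into $[VH]^\perp$, which is exactly $PS|_{[VH]} = 0$; this identifies the rewritten (2) with (3).

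For (2)$\Leftrightarrow$(4), I run the same argument in the Paschke dilation using Lemma \ref{lem;prnd}. Now $\phi_T(1) = t\cdot 1_H$ translates to $\la T\hat{e},\hat{e}\ra = t\cdot 1_H$, and $\phi_T = t\phi$ translates to $T = t\cdot 1_{X^\prime}$ by the injectivity of Lemma \ref{lem;prnd}. Setting $S := T - t\cdot 1_{X^\prime}$ and applying the same shift-and-scale reduction, (2) asserts precisely that every $T \in (\tilde{\sigma}(A)\cup \tilde{W_\phi}(G))^\prime$ with $\la T\hat{e},\hat{e}\ra = 0$ is zero, which is (4). The real technical content has been absorbed into Lemmas \ref{lem;rnd} and \ref{lem;prnd}, so the chief obstacle is essentially bookkeeping: verifying that the shift-and-scale modification keeps $S$ inside the $G$-invariant commutant, and translating the faithful-subspace hypothesis in (3) into the operator identity $V^*SV = 0$.
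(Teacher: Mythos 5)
Your proposal is correct, and it relies on the same essential ingredients as the paper's proof --- Lemmas \ref{lem;rnd} and \ref{lem;prnd}, the injectivity of $T \mapsto \phi_T$, the shift-and-scale trick for a self-adjoint element of the commutant, and the identification of the faithfulness condition with the operator identity $V^*SV=0$ (equivalently $PS|_{[VH]}=0$) --- but you organize the logic differently. The paper proves $(1)\Leftrightarrow(2)$, $(1)\Leftrightarrow(3)$ and $(1)\Leftrightarrow(4)$ separately, re-running the convexity/extremality argument in each case: for $(1)\Rightarrow(3)$ and $(1)\Rightarrow(4)$ it builds explicit decompositions $\phi = t(\tfrac1t\phi_1)+(1-t)(\tfrac{1}{1-t}\phi_2)$ from $sT+t1$ and $1-(sT+t1)$ and invokes extremality of $\phi$ each time. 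You instead use extremality only once, in $(1)\Leftrightarrow(2)$, and then show $(2)\Leftrightarrow(3)$ and $(2)\Leftrightarrow(4)$ as purely operator-theoretic translations: via Lemma \ref{lem;rnd} (resp.\ \ref{lem;prnd}) statement $(2)$ becomes ``every $S$ in the relevant commutant with $V^*SV=0$ (resp.\ $\langle S\hat e,\hat e\rangle=0$) vanishes,'' with the positivity constraint removed by the shift-and-scale reduction at $t=\tfrac12$. This buys a cleaner logical structure and avoids repetition; your $(1)\Rightarrow(2)$ step is also more elementary than the paper's, since defining $\psi_1:=(\phi-\phi_0)/(1-t)$ directly (CP because $\phi_0\le\phi$, unital, and $G$-invariant because $\phi$ and $\phi_0$ are) needs no Radon--Nikodym machinery at all. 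The only point you gloss is the reduction to self-adjoint $S$ before rescaling (take real and imaginary parts, using that the commutant is $*$-closed and $V^*S^*V=(V^*SV)^*$, resp.\ $\langle S^*\hat e,\hat e\rangle=\langle S\hat e,\hat e\rangle^*$); the paper handles this at the same level of brevity (``as $\mu$ preserves adjoints we can consider $T=T^*$''), so this is not a gap.
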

\begin{proof}
We start by proving (1) $\iff$ (2). Suppose $\phi$ is an extreme point of $\ucpag$ and $\phi_0 \in [0, \phi] \cap t\ucpag$, where $0 < t < 1$. Then $\phi_0 = t \phi_1 \leq \phi$ for some $\phi_1 \in \ucpag$. Using Theorem \ref{thm;arnd}, we get $\phi_0 = t\phi_1 = V^* T \rho V$ for some $T \in \racom$ with $0 \leq T \leq  1_K$. Since $\phi_0 \in t \ucpag$, the operator $T \in U_\phi(G)^\prime$ that follows from Lemma \ref{lem;rnd}. 

Then consider the operator $1_K - T$. Since $T \in \{ \rho(A) \cup U_\phi (G) \}^\prime$ and $0 \leq T \leq  1_K$ we get $(1_K - T) \in \{ \rho(A) \cup U_\phi (G) \}^\prime$, and also, $0 \leq (1_K - T) \leq  1_K$. Then, using Lemma \ref{lem;rnd} we get $V^* (1_K - T) \rho V \in [0, \phi] \cap \cpag$. Define $\phi_2 := V^* \frac{(1_K - T)}{1 - t} \rho V$, then  
\begin{align*}
\phi_2(1_A) &= V^*  \rho(1_A) \frac{(1_K - T)}{1 - t} V  \\
&= \frac{1}{1 - t} (V^* 1_K   V - V^* T V) \\
&= \frac{(1-t)1_H}{1 - t} = 1_H
\end{align*} 
This implies $\phi_2 \in \ucpag$. 

For $\phi_1, \phi_2 \in \ucpag$, we get $\phi = t\phi_1 + (1-t)\phi_2$. Since $\phi$ is an extreme point, $\phi = \phi_1 = \phi_2$. Hence, $\phi_0 = t \phi_1 = t \phi$.

Conversely, if $\phi_0 \in [0, \phi] \cap t\ucpag$, then $\phi_0 = t\phi$, where $0 < t < 1$. Suppose $\phi = \lambda\phi_1 + (1 - \lambda) \phi_2$, for $\phi_1, \phi_2 \in \ucpag$, and $0 < \lambda < 1$. Then $\lambda\phi_1 \in [0, \phi] \cap \lambda\ucpag$, hence from the assumption, we get $\lambda\phi_1 = \lambda\phi$, and similarly $(1 - \lambda) \phi_2 = (1- \lambda) \phi$. This implies $\phi_1 = \phi = \phi_2$.

Now we prove (1) $\iff$ (3):
We first assume that the subspace $[VH]$ is a faithful subspace of $K$ for the algebra $\{ \rho(A) \cup U_\phi (G) \}^\prime$. Suppose $\phi_1, \phi_2 \in \ucpag$, and $0 < \lambda < 1$ such that $\phi = \lambda \phi_1 + (1- \lambda) \phi_2$. Since $\lambda \phi_1 \leq \phi$, using Theorem \ref{thm;arnd} we get, $\lambda \phi_1 = V^* T \rho V$ for some $T \in \racom$ such that $0 \leq T \leq 1_K$. Moreover, Lemma \ref{lem;rnd} implies that $T \in U_\phi(G)^\prime$. Let $P$ be the projection of the Hilbert space $K$ onto $[VH]$. Let $a_1, a_2 \in A$ and $h_1, h_2, \in H$, 
\begin{align*}
\left \langle P T V h_1, V h_2 \right \rangle &= \left \langle P T \rho(1_A) V h_1, V h_2 \right \rangle \\
&= \left \langle T \rho(1_A)) V h_1, V h_2 \right \rangle \\
&= \left \langle V^* T \rho(1_A) V h_1,  h_2 \right \rangle \\
&= \left \langle \lambda h_1,  h_2 \right \rangle \\
&= \lambda \left \langle V^* V h_1,  h_2 \right \rangle \\
&=  \left \langle \lambda V h_1, V h_2 \right \rangle
\end{align*}
Therefore, $PT = \lambda 1_{[VH]}$. This implies that $P(T - \lambda 1_K)_{[VH]}$ is $0$. Since $[VH]$ is a faithful subspace of $K$ for the algebra $\{ \rho(A) \cup U_\phi (G) \}^\prime$, and $T - \lambda 1_K \in \{ \rho(A) \cup U_\phi (G) \}^\prime$, we get, $T = \lambda 1_K$. Hence, $\lambda \phi_1 = V^* T \rho V = \lambda \phi$, and $\phi_1 = \phi = \phi_2$.

Conversely, let us assume that $\phi$ is an extreme point of $\ucpag$. Define a positive linear map $\mu: \{ \rho(A) \cup U_\phi (G) \}^\prime \rightarrow B([VH])$ by, $\mu (T) := PT_{[VH]}$. We need to show that $\mu$ is injective. That is, if for some $T \in \{ \rho(A) \cup U_\phi (G) \}^\prime$ we have $\mu(T)= 0$, then $T = 0$. But as $\mu$ preserves adjoints we can consider $T = T^*$. So we may choose positive scalars $s, t$ such that $\frac{1}{4} 1_K \leq sT + t 1_K \leq \frac{3}{4} 1_K$. Then, as $\mu$ is positive, $\frac{1}{4} \mu(1_K) \leq \mu(sT + t 1_K) \leq \frac{3}{4} \mu(1_K)$. So we get: $\frac{1}{4} 1_{[VH]} \leq  t 1_{[VH]}\leq \frac{3}{4} 1_{[VH]}$. This implies that $0 < t < 1$. Next, define: 
\begin{equation*}
\phi_1 := V^*(sT + t 1_K) \rho V ~ \rm{and} ~ \phi_2 := V^*(1_K - (sT + t 1_K)) \rho V.
\end{equation*} 
But we know that $T \in \{ \rho(A) \cup U_\phi (G) \}^\prime$, and therefore $sT + t 1_K$ and $1_K - (sT + t 1_K)$ both operators belong in $\{ \rho(A) \cup U_\phi (G) \}^\prime$. Then applying Lemma \ref{lem;rnd} with observing a fact that $0 < sT + t 1_K <1_K$ and $0 < 1_K - (sT + t 1_K) <1_K$ we get $\phi_1, \phi_2 \in [0, \phi] \cap \cpag$. Moreover, we have $\phi_1 + \phi_2 = \phi$. Since $\mu(T)= 0$, we get
\begin{equation*}
\phi_1(1_A) = V^*(sT + t 1_K) \rho(1_A) V = sV^* T V + tV^*V = t1_H 
\end{equation*}
and similarly $\phi_2(1_A) = (1-t) 1_H$. This implies $\frac{1}{t} \phi_1, \frac{1}{1 - t} \phi_2 \in \ucpag$. 

Now, recall that $\phi = \phi_1 + \phi_2$. Writing $\phi = t (\frac{1}{t} \phi_1) + (1 - t) (\frac{1}{1 - t} \phi_2)$ and using the hypothesis that $\phi$ is an extreme point, we get: 
\begin{equation*}
\phi = \frac{1}{t} \phi_1 = \frac{1}{1 - t} \phi_2.
\end{equation*}
This gives us
\begin{equation*}
\phi_1 = V^*(sT + t 1_K) \rho V = V^* t1_K \rho V = t\phi \leq \phi.
\end{equation*}
By the order preserving affine isomorphism from Theorem \ref{thm;arnd}, and Lemma \ref{lem;rnd}, we must have $sT + t 1_K = t 1_K$, which in turn implies that $T=0$. This shows that, the map $\mu$ is injective, and therefore $[VH]$ is a faithful subspace of $K$ for $\{ \rho(A) \cup U_\phi (G) \}^\prime$.

Finally, we prove $(1)\Leftrightarrow (4)$. Let $(X, \sigma, e)$ be the Paschke dilation such that $\phi(\cdot) = \langle \sigma(\cdot)e, e \rangle$ be as in Theorem \ref{thm;pd}. From Theorem \ref{thm;prnd}, we have $\phi(\cdot) = \langle \sigma(\cdot)e, e \rangle = \langle 1_{X^\prime} \tilde{\sigma}(\cdot)\hat{e}, \hat{e} \rangle$. Suppose for  $T \in (\tilde{\sigma}(A) \cup \tilde{W_\phi}(G))^\prime$ we have $\langle T \hat{e}, \hat{e} \rangle = 0$, then $T = 0$. Then we show that $\phi$ is an extreme point of $\ucpag$. If $\phi = \lambda \phi_1 + (1 - \lambda) \phi_2$ for $\phi_1, \phi_2 \in \ucpag$ and $0 < \lambda < 1$, then $\lambda \phi_1 \leq \phi$. Therefore, by Lemma \ref{lem;prnd}, we get $T_1 \in (\tilde{\sigma}(A) \cup \tilde{W_\phi}(G))^\prime$ with $0 \leq T_1 \leq 1_{X^\prime}$ such that $\lambda \phi_1(a) = \langle T_1 \tilde{\sigma}(a)\hat{e}, \hat{e} \rangle$ for all $a \in A$. Hence, we have:
\begin{equation*}
\lambda \phi_1(1_A) = \langle T_1 \tilde{\sigma}(1_A)\hat{e}, \hat{e} \rangle = \lambda 1_H = \lambda \langle 1_{X^\prime} \tilde{\sigma}(1_A)\hat{e}, \hat{e} \rangle.
\end{equation*}
This implies $\langle (T_1 - \lambda 1_{X^\prime})\hat{e}, \hat{e} \rangle = 0$. Since $T_1 \in (\tilde{\sigma}(A) \cup \tilde{W_\phi}(G))^\prime$ we get $T_1 - \lambda 1_{X^\prime} \in (\tilde{\sigma}(A) \cup \tilde{W_\phi}(G))^\prime$. By hypothesis, we get  $T_1 = \lambda 1_{X^\prime}$. Therefore, $\lambda \phi_1(\cdot)= \langle \lambda \tilde{\sigma}(\cdot)\hat{e}, \hat{e} \rangle = \lambda \phi(\cdot)$. Hence, $\phi_1 = \phi = \phi_2$.

Conversely, assume that $\phi$ is an extreme point of $\ucpag$. Let $T \in (\tilde{\sigma}(A) \cup \tilde{W_\phi}(G))^\prime$ be such that $\langle T \hat{e}, \hat{e} \rangle = 0$. Now define $\mu : (\tilde{\sigma}(A) \cup \tilde{W_\phi}(G))^\prime \rightarrow B(H)$ by $S \mapsto \langle S \hat{e}, \hat{e} \rangle$. By assumption, we have $\mu(T) = 0$, then we prove that $T=0$. But as $\mu$ preserves adjoints we can consider $T = T^*$. Choose positive scalars $s, t$ such that $\frac{1}{4} 1_{X^\prime} \leq sT + t 1_{X^\prime} \leq \frac{3}{4} 1_{X^\prime}$. So, we get 
\begin{equation*}
\frac{1}{4} \mu(1_{X^\prime}) \leq \mu(sT + t 1_{X^\prime}) \leq \frac{3}{4} \mu(1_{X^\prime}).
\end{equation*} 
Since $\mu(T) = 0$, we have:
\begin{equation*}
\frac{1}{4} \langle 1_{X^\prime}\hat{e}, \hat{e} \rangle \leq  t \langle 1_{X^\prime}\hat{e}, \hat{e} \rangle  \leq \frac{3}{4} \langle 1_{X^\prime}\hat{e}, \hat{e} \rangle.
\end{equation*} 
This implies $0 < t < 1$. Define $\phi_i: A \rightarrow B(H)$ for $i=1, 2$ by 
\begin{equation*}
\phi_1(\cdot) := \langle (sT + t 1_{X^\prime}) \tilde{\sigma}(\cdot) \hat{e}, \hat{e} \rangle  \hspace{10pt} \text{and} \hspace{10pt}
\phi_2(\cdot) := \langle (1_{X^\prime} - (sT + t 1_{X^\prime})) \tilde{\sigma}(\cdot) \hat{e}, \hat{e} \rangle .
\end{equation*}
But, we know that $T \in (\tilde{\sigma}(A) \cup \tilde{W_\phi}(G))^\prime$. Therefore, $sT + t 1_{X^\prime}$ and $1_{X^\prime} - (sT + t 1_{X^\prime})$ both operators belong to $(\tilde{\sigma}(A) \cup \tilde{W_\phi}(G))^\prime$. Then applying Lemma \ref{lem;prnd} with observing a fact that $0 < sT + t 1_K <1_{X^\prime}$ and $0 < 1_{X^\prime} - (sT + t 1_{X^\prime}) <1_{X^\prime}$ we get $\phi_1, \phi_2 \in [0, \phi] \cap \cpag$. Moreover, we have $\phi_1 + \phi_2 = \phi$. Since $\mu(T)= 0$, we get
\begin{equation*}
\phi_1(1_A) = \langle (sT + t 1_{X^\prime}) \tilde{\sigma}(1_A) \hat{e}, \hat{e} \rangle  =  t1_H, 
\end{equation*}
and similarly $\phi_2(1_A) = (1-t) 1_H$. This implies $\frac{1}{t} \phi_1, \frac{1}{1 - t} \phi_2 \in \ucpag$. 

Now, recall that $\phi = \phi_1 + \phi_2$. Writing $\phi = t (\frac{1}{t} \phi_1) + (1 - t) (\frac{1}{1 - t} \phi_2)$ and using the hypothesis that $\phi$ is an extreme point, we get: 
\begin{equation*}
\phi = \frac{1}{t} \phi_1 = \frac{1}{1 - t} \phi_2.
\end{equation*}
This gives us
\begin{equation*}
\phi_1(\cdot) = \langle (sT + t 1_{X^\prime}) \tilde{\sigma}(\cdot) \hat{e}, \hat{e} \rangle = \langle  t 1_{X^\prime} \tilde{\sigma}(\cdot) \hat{e}, \hat{e} \rangle = t\phi(\cdot) \leq \phi.
\end{equation*}
By the order preserving affine isomorphism from Theorem \ref{thm;prnd}, and Lemma \ref{lem;prnd}, we must have $sT + t 1_{X^\prime} = t 1_{X^\prime}$, which in turn implies that $T=0$. 
\end{proof}

The previous theorem provides the necessary and sufficient conditions for the UCP map $\phi \in \ucpag$ to be an ergodic UCP map, that is, an element of the set $\text{ext}(\ucpag)$. We deduce that $\phi$ is an extreme point of $\ucpag$ if and only if the Hilbert space $[VH]$ is a faithful subspace for $\{ \rho(A) \cup U_{\phi} (G) \}^\prime$. Using this equivalent condition we derive the following corollary. We use the same notations as in the previous theorem.  

\begin{corollary}\label{cor;egiucp1}
Let $\tau : G \rightarrow \text{Aut}(A)$, and $\phi \in \ucpag$. Let $V^* \rho V$ be the minimal Stinespring dilation of $\phi$. If one of the following conditions is satisfied, then $\phi \in \text{ext}(\ucpag)$.  
\begin{enumerate}
\item The representation $\rho$ of the C*-algebra $A$ is irreducible.
\item The representation $U_\phi$ of the group $G$ is irreducible.
\end{enumerate}
\end{corollary}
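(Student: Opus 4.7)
The plan is to apply Theorem \ref{thm;egiucp}, specifically the equivalence $(1) \Leftrightarrow (3)$, which reduces the corollary to showing that $[VH]$ is a faithful subspace of $K$ for the von Neumann algebra $\mathcal{N} := \{\rho(A) \cup U_\phi(G)\}^\prime$ under either irreducibility hypothesis. Thus the task boils down to controlling how small $\mathcal{N}$ is.

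First I would observe that in either case, $\mathcal{N}$ collapses to the scalars. Indeed, $\mathcal{N} \subseteq \rho(A)^\prime \cap U_\phi(G)^\prime$, so if $\rho$ is irreducible then $\rho(A)^\prime = \mathbb{C} \cdot 1_K$ and hence $\mathcal{N} = \mathbb{C} \cdot 1_K$; similarly, if $U_\phi$ is irreducible then $U_\phi(G)^\prime = \mathbb{C} \cdot 1_K$ and again $\mathcal{N} = \mathbb{C} \cdot 1_K$.

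Next, I would verify the faithfulness condition directly for a scalar algebra. Since $\phi$ is unital, $V : H \to K$ is an isometry (by Theorem \ref{thm;sd}), so $[VH]$ is a nonzero closed subspace of $K$. Let $P$ denote the projection onto $[VH]$. For any $T = c \cdot 1_K \in \mathcal{N}$, the condition $P T_{|[VH]} = 0$ reduces to $c \cdot 1_{[VH]} = 0$, which forces $c = 0$ and hence $T = 0$. This shows $[VH]$ is faithful for $\mathcal{N}$, and by the equivalence $(1) \Leftrightarrow (3)$ of Theorem \ref{thm;egiucp} we conclude $\phi \in \text{ext}(\ucpag)$.

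There is no real obstacle here; the corollary is essentially a direct consequence of Theorem \ref{thm;egiucp} together with the elementary fact that the commutant of an irreducible set of operators is trivial. The only minor care needed is to note that unitality of $\phi$ guarantees $[VH] \neq \{0\}$, so that the scalar $c$ in the argument is genuinely forced to vanish.
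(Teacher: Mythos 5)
Your proof is correct and follows essentially the same route as the paper: both reduce the statement to condition (3) of Theorem \ref{thm;egiucp} and use the fact that irreducibility of either $\rho$ or $U_\phi$ forces $\{\rho(A)\cup U_\phi(G)\}^\prime=\mathbb{C}\cdot 1_K$. Your direct verification that a nonzero subspace is faithful for the scalar algebra is in fact cleaner than the paper's detour through non-degeneracy of $[VH]$ for the double commutant.
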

\begin{proof}
Given that $V^* \rho V$ is the minimal Stinespring dilation of $\phi$, we have $[\rho(A)VH] = K$. Consequently, $[VH]$ is non-degenerate for $\{ \rho(A) \cup U_\phi (G) \}^{\prime\prime}$. Let $P$ be the projection of the Hilbert space $K$ onto $[VH]$.

First, assuming the representation $\rho$ of the C*-algebra $A$ is irreducible, the von Neumann algebra $\{ \rho(A) \cup U_\phi (G) \}^{\prime}$ consists of scalar multiples of identity operators on the Hilbert space $K$. This implies that the subspace $[VH]$ is invariant under $\{ \rho(A) \cup U_\phi (G) \}^\prime$, and hence $P \in \{ \rho(A) \cup U_\phi (G) \}^{\prime \prime}$. Let $T \in \{ \rho(A) \cup U_\phi (G) \}^\prime$. Assume $PT|_{[VH]} = 0$. Then for any $S \in \{ \rho(A) \cup U_\phi (G) \}^{\prime\prime}$, we have 
\begin{equation*}
0 = SPT|_{[VH]} = STP|_{[VH]} = ST|_{[VH]} = TS|_{[VH]}.
\end{equation*} 
Since $VH$ is non-degenerate for $\{ \rho(A) \cup U_\phi (G) \}^{\prime\prime}$, and $S$ is arbitrary element from $\{ \rho(A) \cup U_\phi (G) \}^{\prime\prime}$, we get $T = 0$. That is, the Hilbert space $[VH]$ is a faithful subspace for $\{ \rho(A) \cup U_{\phi} (G) \}^\prime$, and hence $\phi \in \text{ext}(\ucpag)$. 

Furthermore, if we assume the representation $U_\phi$ of the group $G$ is irreducible, then the von Neumann algebra $\{ \rho(A) \cup U_\phi (G) \}^{\prime}$ consists of scalar multiples of the identity operators on the Hilbert space $K$. This implies that the subspace $[VH]$ is invariant under $\{ \rho(A) \cup U_\phi (G) \}^\prime$, and hence $[VH]$ is a faithful subspace for $\{ \rho(A) \cup U_{\phi} (G) \}^\prime$. Consequently, $\phi \in \text{ext}(\ucpag)$. 
\end{proof}

We quickly recall that the notation $\cpa$ denotes the set of all completely positive maps from the C*-algebra $A$ to $B(H)$. A completely positive $\phi \in \cpa$ is called pure if, for any $\psi \in \cpa$ such that $\psi \leq \phi$, we have $\psi = t \phi$ for some $t \geq 0$ (\cite{Arveson1}).
 
\begin{corollary}\label{cor;egiucp2}
Let $\phi$ be an element of  $\ucpag$. Suppose $\phi$ is pure in the set of $\cpa$. Then $\phi$ belongs to $\text{ext}(\ucpag)$.
\end{corollary}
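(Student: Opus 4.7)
The plan is to deduce this directly from the equivalence (1)$\iff$(2) in Theorem \ref{thm;egiucp}. That equivalence reduces checking that $\phi$ is extreme in $\ucpag$ to verifying the following condition: whenever $\phi_0 \in [0,\phi] \cap t\,\ucpag$ for some $0 < t < 1$, one has $\phi_0 = t\phi$. So the whole strategy is to take such a $\phi_0$ and exploit purity to identify it with a scalar multiple of $\phi$, then pin down the scalar using the unital condition.

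Concretely, fix $0 < t < 1$ and suppose $\phi_0 \in [0,\phi] \cap t\,\ucpag$. By definition, $\phi_0 \in \cpa$ and $\phi_0 \leq \phi$, so the purity of $\phi$ in $\cpa$ gives some $s \geq 0$ with $\phi_0 = s\phi$. On the other hand, $\phi_0 \in t\,\ucpag$ means $\phi_0 = t \psi$ for a unital $\psi \in \ucpag$, so $\phi_0(1_A) = t\, 1_H$. Combining these, $s\, 1_H = s\phi(1_A) = \phi_0(1_A) = t\, 1_H$, forcing $s = t$ and hence $\phi_0 = t\phi$. This verifies condition (2) of Theorem \ref{thm;egiucp}, so $\phi \in \text{ext}(\ucpag)$.

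There is no real obstacle; the only subtle point is recognizing that we are not asserting the stronger statement that $\phi$ is extreme in $\ucpa$ (which would follow by the same reasoning applied with $G$ trivial, and is in fact well known). We merely need to observe that purity in $\cpa$ is a priori stronger than extremality in $\ucpag$, since extremality in $\ucpag$ only compares $\phi$ against decompositions into $G$-invariant UCP maps, whereas purity controls all completely positive summands without any invariance or normalization constraint. Once this is noted, the corollary is immediate from the Radon--Nikodym style characterization already in hand.
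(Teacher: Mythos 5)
Your argument is correct, but it follows a genuinely different route from the paper's. The paper proves the corollary by passing through the dilation picture: purity of $\phi$ in $\cpa$ forces the minimal Stinespring representation $\rho$ to be irreducible (Arveson's Corollary 1.4.3), so $\{\rho(A)\cup U_\phi(G)\}^\prime$ consists of scalars, and then Corollary \ref{cor;egiucp1} (i.e.\ the faithful-subspace criterion, equivalence (1)$\iff$(3) of Theorem \ref{thm;egiucp}) yields $\phi\in\text{ext}(\ucpag)$. You instead invoke only the order-theoretic equivalence (1)$\iff$(2) of Theorem \ref{thm;egiucp}: given $\phi_0\in[0,\phi]\cap t\,\ucpag$, purity gives $\phi_0=s\phi$ with $s\geq 0$, and evaluating at $1_A$ pins down $s=t$, verifying condition (2). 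This is sound — $[0,\phi]$ is by definition contained in $\cpa$, so purity applies, and unitality of $\phi$ and of the map underlying $\phi_0$ fixes the scalar. Your approach is more elementary: it bypasses the Stinespring dilation and Arveson's purity--irreducibility result entirely, and in fact it never uses $G$-invariance of the summands, so it actually shows the stronger statement that $\phi$ is extreme in all of $\ucpa$ (which then restricts to $\ucpag$). What the paper's route buys in exchange is the structural information — purity forces irreducibility of $\rho$, hence triviality of $\{\rho(A)\cup U_\phi(G)\}^\prime$ — which ties the corollary to the operator-theoretic machinery (Corollary \ref{cor;egiucp1} and criterion (3)) that the rest of the section is built around. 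One cosmetic point: your closing remark that "purity in $\cpa$ is a priori stronger than extremality in $\ucpag$" is just a restatement of the corollary itself, and extremality in $\ucpa$ already implies extremality in the subset $\ucpag$, so the caveat about not asserting the stronger statement is unnecessary; the proof itself is complete as written.
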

\begin{proof}
Let $V^* \rho V$ be the minimal Stinspring dilation of $\phi$. Since $\phi$ is pure in the set $\cpa$, we have the representation $\rho$ is irreducible \cite[Corollary 1.4.3]{Arveson1}. Then using Corollary \ref{cor;egiucp1} we get the result.
\end{proof}

We end this section with the following remark that discusses the link between the faithfulness of $[VH]$ for $\{ \rho(A) \cup U_{\phi} (G) \}^\prime$ with the non-degeneracy of $[VH]$ for $\{ \rho(A) \cup U_{\phi} (G) \}^{\prime \prime}$. W. Arveson has previously noted this relationship in a general context in \cite{Arveson1}. We include this observation here in a particular case for the sake of completeness.

\begin{remark}\cite[Chapter 1]{Arveson1}
Observe, if the subspace $[VH]$ is faithful for the von Neumann algebra $\{ \rho(A) \cup U_\phi (G) \}^\prime$, then $[VH]$ is separating for $\{ \rho(A) \cup U_\phi (G) \}^\prime$. However, the subspace $[VH]$ is separating for the von Neumann algebra $\{ \rho(A) \cup U_\phi (G) \}^\prime$ if and only if $[VH]$ is non-degenerate for the von Neumann algebra $\{ \rho(A) \cup U_{\phi} (G) \}^{\prime \prime}$. But conversely, if $[VH]$ is non-degenerate for $\{ \rho(A) \cup U_{\phi} (G) \}^{\prime \prime}$, then we do not necessarily have $[VH]$ to be faithful for $\{ \rho(A) \cup U_{\phi} (G) \}^{\prime}$. However, in the special case, when $[VH]$ is invariant under $\{ \rho(A) \cup U_{\phi} (G) \}^{\prime}$, we have $[VH]$ is faithful for $\{ \rho(A) \cup U_{\phi} (G) \}^\prime$ if and only if $[VH]$ is non-degenerate for $\{ \rho(A) \cup U_{\phi} (G) \}^{\prime \prime}$.
\end{remark}

\section{Examples of $G$-invariant Decomposition of UCP Maps} \label{sec;egoi}
In this section, we see some examples of a $G$-invariant decomposition of a map in $\ucpag$ for a finite dimensional Hilbert space $H$. First, we recall a few definitions and results from the classical case of the state space of a C*-algebra (see \cite[Section 4.3]{BR1} for more details). 

Let $A$ be a separable, unital C*-algebra and $S(A)$ be the state space of $A$. Let $G$ be a group and $\tau : G \rightarrow \text{Aut}(A)$ be an action of a group $G$ on $A$. Then $\omega \in S(A)$ is said to be $G$-invariant state if 
\begin{equation*}
\omega(a) = \omega(\tau_g(a))
\end{equation*}  
for all $a \in A$ and $g \in G$. The set of all $G$-invariant states is denoted by $S_G(A)$. This set is compact and convex subset of $S(A)$ in weak*-topology. The set denoted by $\text{ext}(S_G(A))$ consists of all the extreme points of $S_G(A)$, and an element of this set is called a $G$-ergodic state. 

Let $\omega \in S_G(A)$ and $(\pi_{\omega}, H_\omega, \Omega_\omega)$ be the corresponding GNS representation of $\omega$. Since $\omega \in S_G(A)$, for all $g \in G$, we have $(\pi_{\omega} \circ \tau_g, H_\omega, \Omega_\omega)$ be also the GNS representation of $\omega$. Then from the uniqueness of the GNS representation, for fixed $g \in G$, we get a unitary operator, say $U_{\omega}(g)$ on $H_\omega$ such that
\begin{equation*}
U_\omega(g) \pi_{\omega}(a) U_\omega(g)^* = \pi_{\omega}(\tau_g(a)) \hspace{10pt} \text{and} \hspace{10pt} U_\omega(g) \Omega_\omega = \Omega_\omega
\end{equation*} 
for all $a \in A$. Since this is true for all $g \in G$, we get a unitary representation of the group $G$ on $H_\omega$ which we denote by $U_\omega$. 

Corresponding to the action $\tau$, one may define an action $\tau^*$ of the group $G$ on the dual $A^*$. Using this, we define an action of the group $G$ on the algebra $C(S(A))$ of continuous functions on $S(A)$. Further, if $\mu$ is Baire measure on $S(A)$ which is invariant under this action, then one can extend this action to the algebra $\text{L}^\infty(S(A), \mu)$. We denote this action again by $\tau$ (a reader may refer \cite[Section 4.3.1]{BR1} for more details). We recall the following proposition from the classical setting.

\begin{proposition}\cite[Proposition 4.3.1]{BR1}
With the notations as above, we have a one-to-one correspondence between the following:
\begin{enumerate}
\item the orthogonal measures $\mu$, over $S(A)$, with the barycenter $\omega$, which satisfy the invariance condition $\mu(\tau_g(f_1)f_2) = \mu(f_1f_2)$ for all $f_1, f_2 \in \text{L}^\infty(S(A), \mu)$ and $g \in G$;	
\item the abelian von Neumann subalgebra $\mathcal{B}$ of $\{ \pi_{\omega}(A) \cup U_\omega(G) \}^\prime$.
\end{enumerate} 
\end{proposition}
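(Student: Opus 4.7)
The plan is to mimic the proof strategy of Theorem \ref{thm;inv}, replacing Stinespring data by GNS data. The classical counterpart of Theorem \ref{thm;omoas} (see \cite[Theorem 4.1.25]{BR1}) already establishes the bijection between orthogonal measures $\mu$ with barycenter $\omega$ and abelian von Neumann subalgebras $\mathcal{B}$ of $\pi_\omega(A)^\prime$, via a unique positive contraction
\[
k_\mu : \mathrm{L}^\infty(S(A),\mu) \longrightarrow \pi_\omega(A)^\prime
\]
satisfying $\langle k_\mu(f) \pi_\omega(a)\Omega_\omega, \pi_\omega(b)\Omega_\omega\rangle = \int_{S(A)} f(\omega^\prime)\omega^\prime(b^*a)\,\mathrm{d}\mu(\omega^\prime)$, with $\mathcal{B} = k_\mu(\mathrm{L}^\infty(S(A),\mu))$. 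Hence it suffices to prove that the invariance condition on $\mu$ is equivalent to $\mathcal{B} \subseteq U_\omega(G)^\prime$.

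For the forward direction, suppose $\mu(\tau_g(f_1)f_2) = \mu(f_1 f_2)$ for all $f_1, f_2 \in \mathrm{L}^\infty(S(A),\mu)$ and $g \in G$. Using $U_\omega(g)\Omega_\omega = \Omega_\omega$ and $U_\omega(g)^*\pi_\omega(a)U_\omega(g) = \pi_\omega(\tau_{g^{-1}}(a))$, I would compute
\[
\langle U_\omega(g) k_\mu(f) U_\omega(g)^* \pi_\omega(a)\Omega_\omega, \pi_\omega(b)\Omega_\omega\rangle = \int f(\omega^\prime) \omega^\prime\!\left(\tau_{g^{-1}}(b^*a)\right)\mathrm{d}\mu,
\]
which equals $\mu\bigl(f \cdot \tau_{g^{-1}}(\widehat{b^*a})\bigr)$ (where $\widehat{c}(\omega^\prime) := \omega^\prime(c)$, and the action of $G$ on $\mathrm{L}^\infty(S(A),\mu)$ satisfies $\tau_h(\widehat{c}) = \widehat{\tau_h(c)}$). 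Applying the invariance hypothesis with $f_1 = \tau_{g^{-1}}(\widehat{b^*a})$ and $f_2 = f$ reduces this to $\int f(\omega^\prime) \omega^\prime(b^*a)\,\mathrm{d}\mu = \langle k_\mu(f)\pi_\omega(a)\Omega_\omega, \pi_\omega(b)\Omega_\omega\rangle$. By the cyclicity of $\Omega_\omega$ for $\pi_\omega(A)$, this forces $U_\omega(g) k_\mu(f) U_\omega(g)^* = k_\mu(f)$, so $\mathcal{B} \subseteq U_\omega(G)^\prime$.

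For the converse, assume $\mathcal{B} \subseteq U_\omega(G)^\prime$. Reversing the above calculation yields
\[
\int f_2(\omega^\prime)\,\omega^\prime(b^*a)\,\mathrm{d}\mu = \int f_2(\omega^\prime)\,\omega^\prime\!\left(\tau_{g^{-1}}(b^*a)\right)\mathrm{d}\mu
\]
for all $a,b \in A$, $f_2 \in \mathrm{L}^\infty$, and $g \in G$, i.e., $\mu(\tau_g(\widehat{c}) f_2) = \mu(\widehat{c} f_2)$ for every $c \in A$. The main obstacle is extending this from generators $\widehat{c}$ to arbitrary $f_1 \in \mathrm{L}^\infty(S(A),\mu)$. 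Here I would exploit separability of $A$: the previous equality means $\tau_g(\widehat{c}) = \widehat{c}$ in $\mathrm{L}^\infty(S(A),\mu)$ for every $c$ in a countable dense subset of $A$ and every $g$ in (a suitable countable set for) $G$, so a single $\mu$-null set $N$ exists with every $\omega^\prime \in S(A)\setminus N$ satisfying $\omega^\prime \in S_G(A)$. Consequently $\mu$ is supported on $S_G(A)$, and by the pointwise formula $\tau_g(f)(\omega^\prime) = f(\tau_g^*\omega^\prime)$ we get $\tau_g(f_1) = f_1$ $\mu$-a.e.\ for every $f_1 \in \mathrm{L}^\infty(S(A),\mu)$ and every $g \in G$. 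Multiplying by $f_2$ and integrating gives the full invariance condition, completing the correspondence.
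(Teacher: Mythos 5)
Your proposal follows essentially the same route as the paper's proof of its UCP analogue, Theorem \ref{thm;inv} (the paper itself does not reprove the classical proposition, which is quoted from \cite{BR1}): invoke the known bijection between orthogonal measures and abelian subalgebras of $\pi_\omega(A)^\prime$ via $k_\mu$, and then show that the invariance condition is equivalent to $k_\mu(\mathrm{L}^\infty(S(A),\mu))\subseteq U_\omega(G)^\prime$ by the sesquilinear computation with $U_\omega(g)\Omega_\omega=\Omega_\omega$ and cyclicity of $\Omega_\omega$; your forward direction is exactly that computation in GNS language. The one soft spot is in your converse, precisely at the step the paper's UCP version avoids (there the invariance condition is stated only on the generators $\phi'(ab)$, whereas the classical statement quantifies over all $f_1\in \mathrm{L}^\infty$): your claim that a \emph{single} $\mu$-null set $N$ exists off which every $\omega'$ lies in $S_G(A)$ requires countability of $G$ (or a continuity assumption), since you are taking a union of null sets $N_g$ over $g\in G$. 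But this stronger support statement is not needed for the invariance condition: fix $g\in G$, use separability of $A$ to get one null set $N_g$ with $\tau_g^*\omega'=\omega'$ off $N_g$, and conclude $\tau_g(f_1)=f_1$ $\mu$-a.e.\ for every $f_1$, hence $\mu(\tau_g(f_1)f_2)=\mu(f_1f_2)$ for that $g$; alternatively, observe that $\tau_g$ acts as a normal $*$-automorphism of $\mathrm{L}^\infty(S(A),\mu)$ fixing each $\widehat{c}$, hence fixing the weak$^*$-dense algebra these generate and therefore all of $\mathrm{L}^\infty(S(A),\mu)$. With that repair (or rephrasing), the argument is correct; the assertion that $\mu$ is supported in $S_G(A)$ should be treated as the separate follow-up remark, as in the paper and in \cite{BR1}, rather than as an intermediate step.
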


Moreover, if one of the above conditions is true (and hence the other), then the support of $\mu$ is contained in the weak* closed subset $S_G(A)$. 

Now using the above result, we construct a few examples of maps in $\ucpag$ that satisfy the invariance condition given in Theorem \ref{thm;inv}. Then, Proposition \ref{prop;supp} will imply a $G$-invariant decomposition of such maps. The following construction of examples is similar to that of Examples given in \cite[Section 4]{BK3}. Here, we consider the similar set of examples with some additional constraints.

\begin{example} \label{ex;nn(E)}
Let $G$ be a group and $A$ be a unital separable C*-algebra. Let $\tau : G \rightarrow \text{Aut}(A)$ be an action of the group $G$ on $A$. Let $\omega \in S_G(A)$ with the GNS triple $(\pi_\omega, H_\omega, \Omega_\omega)$. For $n \in \bbN$, consider $\phi^n_\omega : M_n(A) \rightarrow M_n(\bbC)$ defined by 
\begin{equation*}
\phi^n_\omega ([a_{i,j}]) := [(\omega(a_{i,j}))_{i,j}].                                          
\end{equation*}
Then $\phi^n_\omega$ is a UCP map. Moreover, $\phi^n_\omega \in \ucpanng$ as $\omega$ is in $S_G(A)$. Let $V^* \rho V$ be the minimal Stinespring dilation of $\phi^n_\omega$, where $V : \bbC^n \rightarrow K$ be an isometry. 
	
For each $\omega^\prime \in S(A)$ we have a UCP map $\phi^n_{\omega^\prime} \in \ucpann$ which is defined similarly as above. Define a measurable function $\tilde{f} : S(A) \rightarrow \ucpann$ as:
\begin{equation*}
\tilde{f}(\omega^\prime) : = \phi^n_{\omega^\prime}.
\end{equation*}	
Let $\mu$ be a measure on $S(A)$ with the barycenter $\omega$. Then we have
\begin{equation*}
\phi^n_\omega ([a_{i,j}]) = [(\omega(a_{i,j}))_{i,j}] = \left [ \left (\int_{S(A)} \omega^\prime(a_{i,j}) \, \mathrm{d} \mu \right ) _{i,j}\right ].
\end{equation*}
Suppose $\tilde{\mu}$ is a pushforward measure on $\ucpann$ obtained by the function $\tilde{f}$. Then for $h, k \in \bbC^n$ we get
\begin{equation*}
\la \phi^n_\omega ([a_{i,j}]) h, k \ra = \int\limits_{\ucpann} \la \phi^\prime([a_{i,j}])h, k \ra \, \mathrm{d} \tilde{\mu}.
\end{equation*}
That is $\tilde{\mu} \in M_{\phi^n_\omega}(\ucpann)$. 	
	
Further, we assume that $\mu$ is an orthogonal measure on $S(A)$ with the barycenter $\omega$. Then there exists a *-isomorphism, given by $k_\mu : \text{L}^\infty (S(A), \mu) \rightarrow \pi_\omega(A)^\prime$ onto its range (see \cite[Proposition 4.1.22]{BR1}). Then from \cite[Example 4.1]{BK3}, we get $\tilde{\mu} \in \mathcal{O}_{\phi^n_\omega}(\ucpann)$. Equivalently, the map $k_{\tilde{\mu}} : \text{L}^\infty (\ucpann, \tilde{\mu}) \rightarrow \rho(M_n(A))^\prime$ is a *-isomorphism onto its range (see Proposition \ref{prop;om}). 
	
Next, we show that if $k_\mu(\text{L}^\infty (S(A), \mu))$ is contained in the algebra $\{ \pi_{\omega}(A) \cup U_\omega(G) \}^\prime$, then $k_{\tilde{\mu}}( \text{L}^\infty (\ucpann, \tilde{\mu}))$ is contained in $\{ \rho(M_n(A)) \cup U_\phi (G) \}^\prime$. To prove this, we have to show that $k_{\tilde{\mu}}(f) \in U_\phi(G)^\prime$ for all $f \in \text{L}^\infty (\ucpann, \tilde{\mu}))$. But from \cite[Example 4.1]{BK3} we know 
\begin{equation*}
k_{\tilde{\mu}}(f) = \begin{bmatrix}
k_\mu(f\circ \tilde{f})       &0 &\ldots  &0\\
0 & k_\mu(f\circ \tilde{f})      &\ddots  &\vdots\\
\vdots  &\ddots  &k_\mu(f\circ \tilde{f})       &0\\
0 &\ldots  &0 &k_\mu(f\circ \tilde{f})	
\end{bmatrix}_{n \times n}
\end{equation*}
and $\rho([a_{i,j}]) = [(\pi_\omega(a_{i,j}))_{i,j}]$.
Since, $\omega \in S_G(A)$ and $\phi^n_\omega \in \ucpanng$ for fixed $g \in G$, we have 
\begin{equation*}
U_\omega(g) \pi_{\omega}(a) U_\omega(g)^* = \pi_{\omega}(\tau_g(a)) \hspace{5pt} \text{and} \hspace{5pt}
U_\phi(g) \rho([a_{i,j}]) U_\phi(g)^* = \rho(\tau_g([a_{i,j}])).
\end{equation*}
From this, we identify $U_\phi(g)$ as
\begin{equation*}
\begin{bmatrix}
U_\omega(g)      &0 &\ldots  &0\\
0 & U_\omega(g)     &\ddots  &\vdots\\
\vdots  &\ddots  &U_\omega(g)       &0\\
0 &\ldots  &0 &U_\omega(g)
\end{bmatrix}_{n \times n}.
\end{equation*}
	
We have assumed that $k_\mu(\text{L}^\infty (S(A), \mu))$ is contained in $\{ \pi_{\omega}(A) \cup U_\omega(G) \}^\prime$, and in particular in  $U_\omega(G)^\prime$.  Then using the identification of $k_{\tilde{\mu}}(f)$ for $f \in \text{L}^\infty (\ucpann, \tilde{\mu}))$ and $U_\phi(g)$ from above, we get $k_{\tilde{\mu}}(f)$ belongs to $U_\phi (G)^\prime$ for all $f \in \text{L}^\infty (\ucpann, \tilde{\mu}))$. This shows that $k_{\tilde{\mu}}( \text{L}^\infty (\ucpann, \tilde{\mu}))$ is contained in $\{ \rho(M_n(A)) \cup U_\phi (G) \}^\prime$. Then by applying Theorem \ref{thm;inv} and Proposition \ref{prop;supp} we get a $G$-invariant decomposition of $\phi^n_\omega$.
\end{example}

The technique used in the following example is similar to that of the previous example. 

\begin{example} \label{ex;1n(E)}
We consider a group $G$ acting on a unital separable C*-algebra $A$, where the action is given by $\tau$. Let $\omega \in S_G(A)$ and $(\pi_\omega, H_\omega, \Omega_\omega)$ be the corresponding GNS triple . For $n \in \bbN$, define a map $\phi_{n,\omega} : A \rightarrow M_n(\bbC)$ by 
\begin{equation*}
\phi_{n,\omega} (a) := \begin{bmatrix}
\omega(a)      &0 &\ldots  &0\\
0 & \omega(a)    &\ddots  &\vdots\\
\vdots  &\ddots  &\omega(a)       &0\\
0 &\ldots  &0 &\omega(a)
\end{bmatrix}_{n \times n}.
\end{equation*}
Then $\phi_{n,\omega}$ is a UCP map. Moreover, $\phi_{n,\omega} \in \ucpang$. Let $V^* \rho V$ be the minimal Stinespring dilation of $\phi_{n,\omega}$, where $V : \bbC^n \rightarrow K$ be an isometry. Similarly, for each $\omega^\prime \in S(A)$, we have a UCP map $\phi_{n, \omega^\prime} \in \ucpan$ as defined above. Define a measurable function $\tilde{f} : S(A) \rightarrow \ucpan$ as:
\begin{equation*}
\tilde{f}(\omega^\prime) : = \phi_{n, \omega^\prime}.
\end{equation*}
Let $\mu$ be a measure on $S(A)$ such that $\omega = \int_{S(A)} \omega^\prime \, \mathrm{d} \mu$. Then we have
\begin{equation*}
\phi_{n, \omega} (a) = \begin{bmatrix}
\int\limits_{S(A)} \omega^\prime(a) \, \mathrm{d} \mu      &0 &\ldots  &0\\
0 & \int\limits_{S(A)} \omega^\prime(a) \, \mathrm{d} \mu   &\ddots  &\vdots\\
\vdots  &\ddots  &\int\limits_{S(A)} \omega^\prime(a) \, \mathrm{d} \mu      &0\\
0 &\ldots  &0 &\int\limits_{S(A)} \omega^\prime(a) \, \mathrm{d} \mu
\end{bmatrix}_{n \times n}.
\end{equation*}
Let $\tilde{\mu}$ be a pushforward measure on $\ucpan$ defined using the function $\tilde{f}$. Then for $h, k \in \bbC^n$ we get
\begin{equation*}
\la \phi_{n, \omega} (a) h, k \ra = \int\limits_{\ucpan} \la \phi^\prime(a)h, k \ra \, \mathrm{d} \tilde{\mu}.
\end{equation*}
That is $\tilde{\mu} \in M_{\phi_{n, \omega}}(\ucpan)$. 
	
Let $\mu$ be an orthogonal measure on $S(A)$ with the barycenter $\omega$. Then by \cite[Proposition 4.1.22]{BR1}, we get a *-isomorphism given by $k_\mu : \text{L}^\infty (S(A), \mu) \rightarrow \pi_\omega(A)^\prime$ onto its range. We know from \cite[Example 4.2]{BK3} that $\tilde{\mu} \in \mathcal{O}_{\phi_{n, \omega}}(\ucpan)$, and therefore, the map $k_{\tilde{\mu}} : \text{L}^\infty (\ucpan, \tilde{\mu}) \rightarrow \rho(A)^\prime$ is a *-isomorphism onto its range (refer Proposition \ref{prop;om}). Similarly, as we have shown in the previous example, we show that, if the algebra $k_\mu(\text{L}^\infty (S(A), \mu))$ is contained in $\{ \pi_{\omega}(A) \cup U_\omega(G) \}^\prime$, then $k_{\tilde{\mu}}( \text{L}^\infty (\ucpan, \tilde{\mu}))$ is contained in $\{ \rho(A) \cup U_\phi (G) \}^\prime$. But from \cite[Example 4.2]{BK3} we have 
\begin{equation*}
k_{\tilde{\mu}}(f) = \begin{bmatrix}
k_\mu(f\circ \tilde{f})       &0 &\ldots  &0\\
0 & k_\mu(f\circ \tilde{f})      &\ddots  &\vdots\\
\vdots  &\ddots  &k_\mu(f\circ \tilde{f})       &0\\
0 &\ldots  &0 &k_\mu(f\circ \tilde{f})
\end{bmatrix}_{n \times n}
\end{equation*}
and
\begin{equation*}
\rho(a) = \begin{bmatrix}
\pi_\omega(a)      &0 &\ldots  &0\\
0 & \pi_\omega(a)     &\ddots  &\vdots\\
\vdots  &\ddots  &\pi_\omega(a)      &0\\
0 &\ldots  &0 &\pi_\omega(a)
\end{bmatrix}_{n \times n}.
\end{equation*}
We have $\omega \in S_G(A)$ and therefore $\phi_{n,\omega} \in \ucpang$. This implies for fixed $g \in G$ we get 
\begin{equation*}
U_\omega(g) \pi_{\omega}(a) U_\omega(g)^* = \pi_{\omega}(\tau_g(a)) \hspace{10pt} \text{and} \hspace{10pt}
U_\phi(g) \rho(a) U_\phi(g)^* = \rho(\tau_g(a)).
\end{equation*}
From this, we identify $U_\phi(g)$ with
\begin{equation*}
\begin{bmatrix}
U_\omega(g)      &0 &\ldots  &0\\
0 & U_\omega(g)     &\ddots  &\vdots\\
\vdots  &\ddots  &U_\omega(g)       &0\\
0 &\ldots  &0 &U_\omega(g)
\end{bmatrix}_{n \times n}.
\end{equation*}
	
We have assumed that $k_\mu(\text{L}^\infty (S(A), \mu))$ is contained in $\{ \pi_{\omega}(A) \cup U_\omega(G) \}^\prime$. This implies $k_\mu(\text{L}^\infty (S(A), \mu)) \subseteq U_\omega(G)^\prime$. Then using the identification of $k_{\tilde{\mu}}(f)$ for $f \in \text{L}^\infty (\ucpan, \tilde{\mu}))$, and $U_\phi(g)$ as above, we get $k_{\tilde{\mu}}(f)$ belongs to $U_\phi (G)^\prime$ for all $f \in \text{L}^\infty (\ucpan, \tilde{\mu}))$. This shows that $k_{\tilde{\mu}}( \text{L}^\infty (\ucpan, \tilde{\mu}))$ is contained in $\{ \rho(A) \cup U_\phi (G) \}^\prime$. Again, as an application of Theorem \ref{thm;inv} and Proposition \ref{prop;supp} we get a $G$-invariant decomposition of $\phi_{n, \omega}$.
\end{example}

\subsection{Open Questions}
We conclude this article with a few questions. This section explores examples of $G$-invariant UCP maps admitting decomposition into $G$-invariant UCP maps using orthogonal measures. In Example \ref{ex;Ginv}, we discussed the map $\phi_G$, which is a $G$-invariant UCP map. Further study of the map $\phi_G$ would be interesting. In connection with Example \ref{ex;Ginv}, we pose the following two questions 
\begin{enumerate}
\item Can we characterize the cases when the map $\phi_G$ is a $G$-ergodic UCP map?
\item Does the map $\phi_G$ admit a $G$ -invariant decomposition?
\end{enumerate}


\section*{Acknowledgement} 
The first named author is partially supported by Science and Engineering Board (DST, Govt. of India) grant no. ECR/2018/000016 and the second named author is supported by CSIR PhD scholarship award letter no. 09/1020(0142)/2019-EMR-I.

The authors would like to express their sincere gratitude to the referee for offering constructive suggestions on the initial draft that helped significantly to improve the manuscript.

\vspace{20pt}
\subsection*{Declaration}
The authors declare that there is no conflict of interest. 

\bibliographystyle{plain}
\bibliography{mybib}

\end{document}